\newcommand{\bburl}[1]{\textcolor{blue}{\url{#1}}}
\newtheorem{thm}{Theorem}[section]
\newtheorem{cor}[thm]{Corollary}
\newtheorem{claim}[thm]{Claim}
\newtheorem{prop}[thm]{Proposition}
\newtheorem{defi}[thm]{Definition}
\newtheorem{rek}[thm]{Remark}
\DeclareMathOperator{\supp}{supp}
\DeclareMathOperator{\spann}{span}
\DeclareMathOperator{\sgn}{sgn}
\numberwithin{equation}{section}
\DeclareFontFamily{U}{mathx}{}
\DeclareFontShape{U}{mathx}{m}{n}{<-> mathx10}{}
\DeclareSymbolFont{mathx}{U}{mathx}{m}{n}
\DeclareMathAccent{\widehat}{0}{mathx}{"70}
\DeclareMathAccent{\widecheck}{0}{mathx}{"71}
\begin{document}

\title{Larger greedy sums for reverse partially greedy bases}

\author{H\`ung Vi\d{\^e}t Chu}

\email{\textcolor{blue}{\href{mailto:hungchu1@tamu.edu}{hungchu1@tamu.edu}}}
\address{Department of Mathematics, Texas A\&M University, College Station, TX 77843, USA}

\begin{abstract} 

An interesting result due to Dilworth et al. was that if we enlarge greedy sums by a constant factor $\lambda > 1$ in the condition defining the greedy property, then we obtain an equivalence of the almost greedy property, a strictly weaker property. 
Previously, the author of the present paper showed that 
enlarging greedy sums by $\lambda$ in the condition defining the partially greedy (PG) property also strictly weakens the property.
However, enlarging greedy sums in the definition of reverse partially greedy (RPG) bases by Dilworth and Khurana again gives RPG bases. The companion of PG and RPG bases suggests the existence of a characterization of RPG bases which, when greedy sums are enlarged, gives an analog of a result that holds for partially greedy bases. 
In this paper, we show that such a characterization indeed exists, answering positively a question previously posed by the author. 
\end{abstract}

\subjclass[2020]{41A65; 46B15}

\keywords{reverse partially greedy; greedy sum; bases.}

\thanks{The author is thankful to Timur Oikhberg for helpful comments on an earlier draft of this paper. The author would also like to thank the anonymous referees for a careful reading and constructive feedback that improves the paper's exposition.}

\maketitle

\tableofcontents

\section{Introduction}
Let $\mathbb{X}$ be an infinite-dimensional Banach space (with dual $\mathbb{X}^*$) over the field $\mathbb K = \mathbb R$ or $\mathbb C$. We define a \textbf{basis} to be any countable collection $\mathcal{B} = (e_n)_{n=1}^\infty$ such that i) the span of $(e_n)_n$ is norm-dense in $\mathbb{X}$; ii) there exist biorthogonal functionals $(e^*_n)_n\subset \mathbb{X}^*$ such that $e^*_{n}(e_m) = \delta_{n,m}$; and iii) there exist $c_1, c_2> 0$ such that $c_1\leqslant \|e_n\|, \|e_n^*\|\leqslant c_2$ for all $n$. In the literature, the totality condition: $\overline{\spann(e^*_n)}^{w^*} = X^*$ is sometimes called for to guarantee that for each $x$, the sequence $(e_n^*(x))_{n=1}^{\infty}$ is unique. Since uniqueness is not important for our purpose, we do not assume totality.

In 1999, Konyagin and Temlyakov \cite{KT1} introduced the notion of greedy bases as follows: for each $x\in\mathbb{X}$, a finite set $\Lambda$ is a greedy set of $x$ if 
$$\min_{n\in \Lambda}|e_n^*(x)| \ \geqslant\ \max_{n\notin \Lambda}|e_n^*(x)|.$$
For $m\in \mathbb{N}$, let $G(x, m)$ denote the set of all greedy sets of $x$ of cardinality $m$. A basis is said to be \textbf{greedy} if there exists a constant $\mathbf C\geqslant 1$ such that 
\begin{equation}\label{e8}\|x-P_{\Lambda}(x)\|\ \leqslant\ \mathbf C\sigma_m(x), \forall x\in\mathbb{X}, \forall m\in\mathbb{N}, \forall \Lambda\in G(x, m),\end{equation}
where $P_A(x) = \sum_{n\in A}e_n^*(x)e_n$ for any finite set $A$ and 
$$\sigma_m(x)\ :=\ \inf\left\{\left\|x-\sum_{n\in A}a_ne_n\right\|\,:\, a_n\in\mathbb{K}, |A|\leqslant m\right\}.$$
Later, Dilworth et al. \cite{DKKT} introduced the so-called \textbf{almost greedy} bases: 
a basis is almost greedy if there exists a constant $\mathbf C\geqslant 1$ such that
\begin{equation}\label{e9}\|x-P_{\Lambda}(x)\|\ \leqslant\ \mathbf C\inf\{\|x-P_A(x)\|: |A| = m\}, \forall x\in \mathbb{X}, \forall m\in\mathbb{N}, \forall \Lambda\in G(x,m).\end{equation}
By definition, a greedy basis is almost greedy, but an almost greedy basis is not necessarily greedy (see \cite{KT1} or \cite[Example 10.2.9]{AK}). 
Among other results, Dilworth et al. showed a suprising equivalence of almost greedy bases. In particular, for any fixed $\lambda > 1$, if we enlarge greedy sums from size $m$ to $\lceil \lambda m\rceil$ in \eqref{e8} to have
\begin{equation}\label{e10}\|x-P_{\Lambda}(x)\|\ \leqslant\ \mathbf C\sigma_m(x), \forall x\in\mathbb{X}, \forall m\in\mathbb{N}, \forall \Lambda\in G(x, \lceil \lambda m\rceil),\end{equation}
then the condition \eqref{e10} is equivalent to the condition \eqref{e9} (with possibly different constants). That is, by increasing the size of greedy sums linearly, we move from the realm of greedy bases to a strictly weaker realm of almost greedy bases. From another perspective, for an almost greedy basis, enlarged greedy sums are optimal. 

Continuing the work, the author of the present paper \cite{C1} investigated the situation for \textbf{partially greedy} (PG) bases (also introduced by Dilworth et al. for Schauder bases \cite{DKKT} and by Berasategui et al. for general bases \cite{BBL}), which are defined to satisfy the condition: there exists a constant $\mathbf C\geqslant 1$ such that
\begin{equation}\label{e11}\|x-P_{\Lambda}(x)\|\ \leqslant\ \mathbf C\inf_{k\leqslant m}\left\|x-\sum_{n=1}^k e_n^*(x)e_n\right\|, \forall x\in \mathbb{X}, \forall m\in\mathbb{N}, \forall \Lambda\in G(x,m).\end{equation}
Since their introduction in \cite{DKKT}, partially greedy bases have been studied extensively in \cite{BBL, BBC1, C1, C2, DK, DKO, K}. According to \cite[Theorem 1.8]{C1}, if we enlarge the size of greedy sums from $m$ to $\lceil\lambda m\rceil$ in \eqref{e11}, we obtain strictly weaker greedy-type bases. Therefore, similar to Dilworth et al.'s result, enlarging greedy sums in the PG property strictly weakens the property. 

To complete the picture, let us consider \textbf{reverse partially greedy} (RPG) bases introduced by Dilworth and Khurana \cite{DK}. 
The original definition of RPG \cite{DK} is relatively more technical: first, given two sets $A, B\subset\mathbb{N}$, we write $A > B$ if for all $a\in A$ and $b\in B$, we have $a > b$. Respective definitions hold for other inequalities $<, \geqslant$, and $\leqslant$. Also, it holds vacuously that $\emptyset > A$ and $\emptyset < A$ for all sets $A$. 
A basis is RPG if there exists a constant $\mathbf C\geqslant 1$ such that
\begin{equation}\label{e12}\|x-P_{\Lambda}(x)\|\ \leqslant\ \mathbf C\widetilde{\sigma}^{R, \Lambda}_m(x), \forall x\in\mathbb{X}, \forall m\in\mathbb{N}, \forall \Lambda\in G(x,m),\end{equation}
where $$\widetilde{\sigma}^{R,\Lambda}_m(x)\ =\ \inf\{\|x-P_A(x)\|\,:\, |A|\leqslant m, A > \Lambda\}.$$
Recent work has confirmed that RPG bases are truly companions of PG bases (see \cite{C1, DK, K}). That is, if a result holds true for PG bases, there is a corresponding result that holds for RPG bases. We, therefore, suspect that as in the case of PG bases, if we enlarge the size of greedy sums in a condition defining RPG bases, we would then obtain a strictly weaker greedy-type condition. However, by \cite[Theorem 5.8]{C1}, enlarging greedy sums in \eqref{e12} still gives us RPG bases. This hints us at the existence of an equivalent reformulation of RPG bases such that when we enlarge greedy sums in the reformulation, we strictly weaken the RPG property. The main results in this paper show that such an equivalent reformulation indeed exists. 

Given a set $A\subset\mathbb{N}$, define 
$$\widecheck{\sigma}^{A}_m(x)\ :=\ \inf\{\|x-P_{I}(x)\|\, :\, \mbox{either } I = \emptyset\mbox{ or } I\mbox{ is an interval},  A \leqslant \max I, |I|\leqslant m\}.$$
Our first result gives an equivalence of the notion of RPG bases.
\begin{thm}\label{m2}
A basis $\mathcal{B}$ is RPG if and only if there exists $\mathbf C \geqslant 1$ such that 
\begin{equation}\label{e1}\|x-P_{\Lambda}(x)\|\ \leqslant\ \mathbf C\widecheck{\sigma}^{\Lambda}_m(x),\forall x\in\mathbb{X}, \forall m\in\mathbb{N}, \forall \Lambda\in G(x, m).\end{equation}
\end{thm}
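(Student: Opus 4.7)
The plan is to establish the two directions separately.

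For the forward implication, assuming RPG with constant $\mathbf C$, the obstacle is that the given interval $I$ may overlap with $\Lambda$, whereas the RPG definition only involves sets strictly above $\Lambda$. To reduce to RPG, I would set $y := x - P_{I\cap\Lambda}(x)$ and verify three claims: (a) $\Lambda\setminus I \in G(y, |\Lambda\setminus I|)$, because the nonzero coefficients of $y$ restricted to $\Lambda\setminus I$ coincide with those of $x$ and are at least $\alpha := \min_{n\in\Lambda}|e_n^*(x)|$, while all other nonzero coefficients of $y$ are at most $\alpha$; (b) $|I\setminus\Lambda| \leqslant |\Lambda\setminus I|$, immediate from $|I|\leqslant m = |\Lambda|$; (c) $I\setminus\Lambda > \Lambda\setminus I$, which is where the interval structure of $I$ combined with $\max I \geqslant \max\Lambda$ is used: any $n\in\Lambda\setminus I$ satisfies $n \leqslant \max\Lambda \leqslant \max I$ and $n\notin I$, hence $n<\min I$, while every element of $I\setminus\Lambda$ is at least $\min I$. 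Applying RPG to $y$ with greedy set $\Lambda\setminus I$ and comparison set $I\setminus\Lambda$ and unraveling the definitions gives $\|x-P_\Lambda(x)\| \leqslant \mathbf C \|x-P_I(x)\|$. The edge case $I=\emptyset$ reduces to the quasi-greedy consequence of RPG (take $A=\emptyset$ there).

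For the reverse implication, assuming \eqref{e1}, I would first note that taking $I=\emptyset$ yields quasi-greediness with constant $\mathbf C + 1$. Given $\Lambda\in G(x,m)$ and $A>\Lambda$ with $|A|\leqslant m$, I would combine two applications of \eqref{e1}. First, applied to $y := x-P_A(x)$ (for which $\Lambda$ remains greedy of size $m$, since $\Lambda \cap A = \emptyset$) with the trivial interval $I=\emptyset$, it gives $\|x-P_\Lambda(x)-P_A(x)\| \leqslant \mathbf C\|x-P_A(x)\|$. The triangle inequality then leaves the term $\|P_A(x)\|$ to control. For this I would apply \eqref{e1} to the auxiliary vector $x^* := P_\Lambda(x)+P_A(x)$, which also has $\Lambda$ as a greedy set of size $m$, with a carefully chosen interval. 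In the case where $A$ has spread $\max A - \min A + 1 \leqslant m$, the interval $I = [\min A, \max A]$ works: a direct computation using $\Lambda \cap I = \emptyset$ and $A\subseteq I$ gives $\|P_A(x)\| \leqslant \mathbf C \|P_\Lambda(x)\|$, and combining with $\|P_\Lambda(x)\| \leqslant (\mathbf C+1)\|x-P_A(x)\|$ (from quasi-greediness applied to $y$, using that $P_\Lambda(y)=P_\Lambda(x)$) yields the desired RPG bound with a uniform constant.

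The main obstacle is the sparse case, when $A$ has spread exceeding $m$ so that no valid interval contains $A$. I expect to handle this either by decomposing $A$ into consecutive blocks of bounded spread and iterating while carefully tracking constants, or by choosing a non-obvious interval that exploits the overlap flexibility in \eqref{e1}---that is, intervals $I$ with $\min I \leqslant \max\Lambda < \max I$, a feature that distinguishes the new characterization from the naive restriction of RPG to intervals strictly above $\Lambda$. Identifying the right interval (or the right decomposition) for this case, and showing that the use of overlapping intervals in \eqref{e1} is exactly what is required, is the principal technical hurdle I foresee.
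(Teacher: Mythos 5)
Your forward direction is correct and more direct than the paper's. Applying the RPG inequality to the single vector $y := x - P_{I\cap\Lambda}(x)$, with greedy set $\Lambda\setminus I$ and comparison set $I\setminus\Lambda$, yields $\|x-P_\Lambda(x)\| = \|y - P_{\Lambda\setminus I}(y)\| \leqslant \mathbf C\|y - P_{I\setminus\Lambda}(y)\| = \mathbf C\|x-P_I(x)\|$ with the RPG constant itself; your verifications (a)--(c) all check out, and the degenerate case $\Lambda\setminus I=\emptyset$ forces $\Lambda=I$ and is trivial. The paper instead routes through the Dilworth--Khurana characterization (Theorem~\ref{m1}), splits $x - P_\Lambda(x)$ by the triangle inequality into three pieces, and invokes the UL property and reverse conservativeness, arriving at the constant $1+\mathbf C_q + 4\mathbf C_q^3\Delta_{rc}$ rather than $\mathbf C$.

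The reverse direction, however, has a genuine gap that you flag yourself: your argument covers only the case $s(A) := \max A - \min A + 1\leqslant m$, and neither of the two strategies you propose for the sparse case is the one that closes it. The missing device is a \emph{filler} set, and the cleaner target is reverse conservativeness rather than the RPG inequality itself (quasi-greediness already follows from taking $I=\emptyset$ in \eqref{e1}, and Theorem~\ref{m1} then finishes). Given finite sets $B < A$ with $|A|\leqslant |B|$, set $D := [\min A, \max A]\setminus A$ and $x := 1_A + 1_B + 1_D$. Then $B\cup D$ is a greedy set of $x$ of cardinality $|B| + s(A) - |A|$, the interval $I := A\cup D = [\min A, \max A]$ satisfies $|I| = s(A) \leqslant |B\cup D|$ (because $|A|\leqslant|B|$) and $B\cup D \leqslant \max I$, so \eqref{e1} gives $\|1_A\| = \|x - P_{B\cup D}(x)\| \leqslant \mathbf C\|x - P_I(x)\| = \mathbf C\|1_B\|$. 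Note that $I$ genuinely overlaps the greedy set $B\cup D$ (on $D$), so the overlap flexibility in \eqref{e1} you identified is indeed what is being exploited --- but it is the filler $1_D$ that makes the cardinalities work with no hypothesis on $s(A)$, not a decomposition of $A$ into blocks. (Your auxiliary vector $x^* = P_\Lambda(x)+P_A(x)$ can in fact be repaired in the same spirit by adjoining $\alpha 1_D$ with $\alpha := \min_{n\in\Lambda}|e^*_n(x)|$ and taking greedy set $\Lambda\cup D$ with interval $A\cup D$, but proving reverse conservativeness and citing Theorem~\ref{m1} is shorter.)
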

To the author's knowledge, \eqref{e1} is the first characterization of RPG bases such that the right side of \eqref{e1} involves a projection onto consecutive elements of the basis. This resembles \eqref{e11}, which defines PG bases. Indeed, \eqref{e1} highlights the correspondence between PG bases and RPG bases by revealing the relative position of the interval $I$ with respect to the greedy set $\Lambda$. Specifically, for PG bases, since we project onto the first elements of $\mathcal{B}$, we have $\min I \leqslant \Lambda$, while for RPG bases, $\Lambda\leqslant \max I$. This leads us to a new characterization of PG bases. 

\begin{thm}\label{ma1}
A basis $\mathcal{B}$ is PG if and only if there exists $\mathbf C \geqslant 1$ such that 
\begin{equation*}\|x-P_{\Lambda}(x)\|\ \leqslant\ \mathbf C\widehat{\sigma}^{\Lambda}_m(x),\forall x\in\mathbb{X}, \forall m\in\mathbb{N}, \forall \Lambda\in G(x, m),\end{equation*}
where $$\widehat{\sigma}^{\Lambda}_m(x) \ :=\ \inf\{\|x-P_{I}(x)\|\, :\, \mbox{either } I = \emptyset\mbox{ or } I\mbox{ is an interval},  \Lambda \geqslant \min I, |I|\leqslant m\}.$$
\end{thm}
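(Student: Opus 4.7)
The plan is to prove the two implications separately. The direction ``new condition implies PG'' is immediate: for each $k \leq m$, the initial segment $[1, k]$ is an interval satisfying $\min[1, k] = 1 \leq \min \Lambda$ and $|[1, k]| = k \leq m$, and therefore lies in the infimum set defining $\widehat{\sigma}^\Lambda_m(x)$. Thus $\widehat{\sigma}^\Lambda_m(x) \leq \inf_{k \leq m}\|x - \sum_{n=1}^{k} e_n^*(x) e_n\|$, and the new condition recovers \eqref{e11} with the same constant.

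For the converse, I would assume PG with constant $\mathbf{C}_{PG}$ and fix $x \in \mathbb{X}$, $\Lambda \in G(x, m)$, and a nonempty interval $I = [p, q]$ with $p \leq \min \Lambda$ and $|I| \leq m$. (The case $I = \emptyset$ reduces to the quasi-greedy bound $\|x - P_\Lambda(x)\| \leq \mathbf{C}_{PG} \|x\|$, which PG delivers by taking $k = 0$.) The key auxiliary vector is $y := x - P_I(x)$. Setting $\alpha := \min_{n \in \Lambda}|e_n^*(x)|$ and using $|e_n^*(x)| \leq \alpha$ for $n \notin \Lambda$, a direct check shows that $\Lambda' := \Lambda \setminus I$ is a greedy set of $y$. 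Applying PG to $y$ with $\Lambda'$ at $k = 0$ yields
$$\|x - P_{I \cup \Lambda}(x)\| \ =\ \|y - P_{\Lambda'}(y)\|\ \leq\ \mathbf{C}_{PG}\|y\|\ =\ \mathbf{C}_{PG}\|x - P_I(x)\|.$$
Since $I \cup \Lambda = \Lambda \cup (I \setminus \Lambda)$ is a disjoint union, we have $x - P_\Lambda(x) = (x - P_{I \cup \Lambda}(x)) + P_{I \setminus \Lambda}(x)$, so by the triangle inequality it remains to bound $\|P_{I \setminus \Lambda}(x)\|$ by a constant multiple of $\|x - P_I(x)\|$.

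The main obstacle is this last estimate. Set $A := I \setminus \Lambda$ and $B := \Lambda \setminus I$. Since $\Lambda \subseteq [p, \infty)$, we have $A \subseteq [p, q]$ and $B \subseteq [q+1, \infty)$, so $A < B$; the hypothesis $|I| \leq m$ gives $|A| \leq |B|$; and $A \cap \Lambda = \emptyset$ together with $B \subseteq \Lambda$ yields $\max_{n \in A}|e_n^*(x)| \leq \alpha \leq \min_{n \in B}|e_n^*(x)|$. Combining the characterization of PG bases as the quasi-greedy-plus-conservative bases (see \cite{BBL}) with the standard truncation argument for quasi-greedy bases, I expect to produce a constant $D$ depending only on $\mathcal{B}$ with $\|P_A(x)\| \leq D\|P_B(x)\|$. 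Noting $P_B(x) = P_{\Lambda'}(y)$ and applying the triangle inequality to the displayed inequality above gives $\|P_B(x)\| \leq (1 + \mathbf{C}_{PG})\|x - P_I(x)\|$, so that
$$\|x - P_\Lambda(x)\|\ \leq\ \big[\mathbf{C}_{PG} + D(1 + \mathbf{C}_{PG})\big]\,\|x - P_I(x)\|.$$
Taking the infimum over admissible $I$ yields the new characterization. The technical heart of the argument is the conservative/truncation estimate for the pair $(A,B)$, which runs in close parallel with the analogous step in the proof of Theorem~\ref{m2}.
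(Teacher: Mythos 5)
Your proof is correct and mirrors the paper's intended argument (the paper says the proof of Theorem~\ref{ma1} is obtained from the proof of Theorem~\ref{m2} by obvious modifications), but one direction is handled more economically, which is worth noting.

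For the direction ``new condition implies PG'', the paper's Theorem~\ref{m2} proof goes through the characterization quasi-greedy $+$ reverse conservative (Theorem~\ref{m1}), and the ``obvious modification'' would similarly pass through quasi-greedy $+$ conservative. You instead observe that every initial segment $[1,k]$ with $0 \leq k \leq m$ is an admissible $I$ in the definition of $\widehat{\sigma}^{\Lambda}_m$ (since $\min[1,k]=1 \leq \min\Lambda$ always holds), so $\widehat{\sigma}^{\Lambda}_m(x)$ is dominated by the infimum in \eqref{e11}, giving the implication immediately with the same constant. This shortcut is genuinely simpler than the paper's route; note that it is available for PG but \emph{not} for RPG, because in $\widetilde{\sigma}^{R,\Lambda}_m$ the competing sets $A$ need not be intervals, so the analogous domination of infima fails there. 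For the converse, your decomposition $x - P_\Lambda(x) = (x - P_{I\cup\Lambda}(x)) + P_{I\setminus\Lambda}(x)$ is a repackaging of the paper's split \eqref{e2}: you absorb $\|x-P_I(x)\| + \|P_{\Lambda\setminus I}(x)\|$ into the single term $\|x-P_{I\cup\Lambda}(x)\|$ via PG at $k=0$ (which is the suppression quasi-greedy bound). The identification of the pair $A = I\setminus\Lambda$, $B = \Lambda\setminus I$ with $A<B$, $|A|\leq|B|$, and the coefficient comparison $\max_{n\in A}|e_n^*(x)|\leq \alpha\leq \min_{n\in B}|e_n^*(x)|$ is exactly what the analog of \eqref{e4} needs; the ``truncation argument'' you invoke is really the UL property for quasi-greedy bases combined with conservativeness, as in the chain of inequalities in \eqref{e4}, and it produces the constant $D=4\mathbf{C}_q^2\Delta_c$. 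Your route to $\|P_B(x)\|\leq(1+\mathbf{C}_{PG})\|x-P_I(x)\|$ via the triangle inequality is slightly less sharp than bounding $\|P_{\Lambda\setminus I}(x)\|\leq\mathbf{C}_q\|x-P_I(x)\|$ directly as in \eqref{e3}, but this only affects the final constant, not correctness.
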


Next, for any fixed $\lambda \geqslant 1$, we study the following condition: there exists $\mathbf C\geqslant 1$ such that
\begin{equation}\label{e5}
    \|x-P_{\Lambda}(x)\|\ \leqslant\ \mathbf C\widecheck{\sigma}_m^{\Lambda}(x), \forall x\in\mathbb{X}, \forall m\in \mathbb{N},\forall \Lambda\in G(x, \lceil \lambda m\rceil).
\end{equation}
As stated above, our goal is to show that \eqref{e5} is strictly weaker than the RPG property to have an analog of Dilworth et al.'s theorem for RPG bases. First, we give bases that satisfy \eqref{e5} a name. 

\begin{defi}\normalfont\label{bdf}
For a fixed $\lambda \geqslant 1$, a basis $\mathcal{B}$ is said to be $\lambda$-RPG of type 2 (or $\lambda$-RPG2, for short) if $\mathcal{B}$ satisfies \eqref{e5}. In this case, the least constant in \eqref{e5} is denoted by $\mathbf C_{\lambda, rp2}$, and the basis is said to be $\lambda$-RPG2 with constant $\mathbf C_{\lambda, rp2}$. We will use the shorthand ``w.const" in place of ``with constant". 
\end{defi}

Here we use ``of type 2" to not confuse ourselves with $\lambda$-RPG bases in \cite[Definition 5.5]{C1}, which defines a basis to be $\lambda$-RPG if it satisfies \eqref{e12} with enlarged greedy sums from $m$ to $\lceil\lambda m\rceil$. For all $\lambda\geqslant 1$, the $\lambda$-RPG property in \cite[Definition 5.5]{C1} is equivalent to RPG property (see \cite[Theorem 5.8]{C1}). We shall show that this is not the case for the $\lambda$-RPG2 property. 

\begin{thm}\label{m8}
Let $\lambda > 1$. The following hold.
\begin{enumerate}
    \item[i)] If $\mathcal{B}$ is RPG, then $\mathcal{B}$ is $\lambda$-RPG2.
    \item[ii)] There exists an unconditional basis $\mathcal{B}$ that is $\lambda$-RPG2 but is not RPG. 
\end{enumerate}
\end{thm}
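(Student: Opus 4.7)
For (i), the plan is to invoke the characterization from Theorem \ref{m2}. Suppose $\mathcal{B}$ is RPG with constant $\mathbf{C}$. By Theorem \ref{m2}, we have $\|x-P_\Gamma(x)\| \leqslant \mathbf{C}\,\widecheck{\sigma}^{\Gamma}_{n}(x)$ for every $x\in\mathbb{X}$, every $n\in\mathbb{N}$, and every $\Gamma\in G(x,n)$. Given $\Lambda\in G(x,\lceil\lambda m\rceil)$, apply this with $\Gamma = \Lambda$ and $n=\lceil\lambda m\rceil$ to obtain $\|x-P_\Lambda(x)\|\leqslant \mathbf{C}\,\widecheck{\sigma}^{\Lambda}_{\lceil\lambda m\rceil}(x)$. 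Because the set of admissible intervals $I$ in the definition of $\widecheck{\sigma}^{\Lambda}_{n}(x)$ only grows as $n$ grows, the quantity $\widecheck{\sigma}^{\Lambda}_{n}(x)$ is non-increasing in $n$. Since $\lceil\lambda m\rceil\geqslant m$, this yields $\widecheck{\sigma}^{\Lambda}_{\lceil\lambda m\rceil}(x)\leqslant \widecheck{\sigma}^{\Lambda}_{m}(x)$, and (i) follows with $\mathbf{C}_{\lambda,rp}\leqslant \mathbf{C}$.

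For (ii), the approach is to construct an explicit unconditional basis $\mathcal{B}=(e_n)$ on a suitable Banach space $\mathbb{X}$ together with a sequence of witnesses $(x_N,\Lambda_N,m_N)$ demonstrating that the RPG constants blow up, while verifying that the $(\lambda,\mathrm{RPGII})$ constant remains uniformly bounded. The natural template is the one used in \cite{C1} for the $(\lambda,\mathrm{PG})$ counterexample, adapted to the ``reverse'' setting: work in a direct sum $\mathbb{X}=\bigl(\bigoplus_{k} F_k\bigr)_{\ell_p}$ of carefully weighted finite-dimensional blocks $F_k$, arranged so that each block has one ``defect'' coordinate whose weight is positioned to make the greedy set of size $m_N$ avoid a ``correcting'' neighbor. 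Unconditionality is built in by using an absolute norm on each $F_k$ and on the $\ell_p$-sum. The witness $x_N$ is chosen so that (a) there is a greedy set $\Lambda_N$ of size $m_N$ for which the smallest interval $I$ with $\max I\geqslant \Lambda_N$ and $|I|\leqslant m_N$ cannot reach enough of the defect coordinates in the tail, forcing $\widecheck{\sigma}^{\Lambda_N}_{m_N}(x_N)$ to be large relative to $\|x_N-P_{\Lambda_N}(x_N)\|$; but (b) any greedy set of size $\lceil\lambda m_N\rceil$ is forced, by the prescribed magnitudes, to engulf enough of those defect coordinates that an interval of length $m_N$ whose maximum sits past the greedy set can match the projection error up to a constant.

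The main obstacle is step (b), i.e.\ verifying $(\lambda,\mathrm{RPGII})$ \emph{uniformly} in $x$, not just for the witnesses. Failure of RPG at a sequence of witnesses is the easy half; the delicate part is to show that once a greedy set $\Lambda$ has size $\lceil\lambda m\rceil$, for \emph{every} vector $x\in\mathbb{X}$ and every such $\Lambda$, one can produce an admissible interval $I$ of length $\leqslant m$ with $\max I\geqslant \Lambda$ that witnesses $\|x-P_\Lambda(x)\|\leqslant \mathbf{C}\,\|x-P_I(x)\|$. The plan is to use the block structure of $\mathbb{X}$, unconditionality, and a pigeonhole argument on how $\Lambda$ intersects each block to locate a suitable interval $I$ in one block near $\max\Lambda$; the factor $\lambda>1$ provides the slack needed to absorb the block-boundary losses in the estimate. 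The constants $\lambda,p$, and the block-size sequence $(|F_k|)_k$ will need to be tuned together so that the inequality $\lceil\lambda m\rceil\geqslant m + (\text{safety margin})$ holds in the right form on every scale.
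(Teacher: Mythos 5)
Part (i) of your proposal is correct and in fact slightly cleaner than the paper's route. The paper argues through Theorem~\ref{m9}: it unpacks RPG into ``quasi-greedy $+$ reverse conservative,'' observes that reverse conservativeness trivially implies ($\lambda$, reverse conservative of type II), and then reassembles via Theorem~\ref{m9}. You instead stay at the level of the projection inequality: take $\Lambda\in G(x,\lceil\lambda m\rceil)$, apply the characterization \eqref{e1} from Theorem~\ref{m2} with $n=\lceil\lambda m\rceil$, and use that $\widecheck{\sigma}^{\Lambda}_{n}(x)$ is non-increasing in $n$ (a larger $n$ allows more intervals $I$, so the infimum can only drop). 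Since $\lceil\lambda m\rceil\geqslant m$, this gives $\widecheck{\sigma}^{\Lambda}_{\lceil\lambda m\rceil}(x)\leqslant\widecheck{\sigma}^{\Lambda}_{m}(x)$, and the inequality \eqref{e5} follows with the same constant. This is fine; no gap.

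Part (ii), however, is not a proof but a plan, and the plan stops precisely at the step that actually requires an argument. You yourself flag that verifying the $(\lambda,\mathrm{RPGII})$ property uniformly in $x$ and $\Lambda$ is ``the delicate part,'' and you offer only a pigeonhole heuristic and a promise that ``the constants will need to be tuned together.'' Nothing in the proposal exhibits a concrete space, a concrete basis, or concrete witnesses, and nothing establishes either of the two required estimates. For comparison, the paper's example is quite specific and its mechanism is worth noting because it is different from the block-sum template you describe. One takes the canonical basis of the completion of $c_{00}$ under
$$\|x\|=\sup_{\pi,\pi'}\left(\sum_{n\in D}u_{\pi(n)}|x_n|+\sum_{n\notin D}v_{\pi'(n)}|x_n|\right),\qquad D=\{2^n: n\in\mathbb N_0\},\ u_n=\tfrac1{\sqrt n},\ v_n=\tfrac1n,$$
which is $1$-unconditional by construction (so one is free to reduce to indicator vectors via Theorem~\ref{m9}). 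RPG fails because one can choose $A\subset D$, $B\subset\mathbb N\setminus D$, $B<A$, $|A|=|B|=N$, giving $\|1_A\|\sim\sqrt N$ versus $\|1_B\|\sim\ln N$. The $(\lambda,\mathrm{RPGII})$ estimate, by contrast, reduces via Theorem~\ref{m9} to checking ($\lambda$, reverse conservative of type II), and the decisive point is that elements of $D$ are exponentially sparse: if $A$ contains $N$ elements of $D$ then $s(A)\gtrsim 2^{N}$, so the hypothesis $(\lambda-1)s(A)+|A|\leqslant|B|$ forces $|B|$ to be exponentially large in $N$, whence $\|1_B\|\gtrsim\ln|B|\gtrsim N\gtrsim\sqrt N\sim\|1_A\|$. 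No pigeonhole over blocks, no $\ell_p$-sum, and critically no need to handle arbitrary $x$ directly: the reduction to indicator vectors (via quasi-greediness and Proposition~\ref{p2}/Theorem~\ref{m9}) is what makes the uniformity manageable. That reduction is the missing idea in your sketch; without it, the ``verify uniformly in $x$'' step you identified as the obstacle remains unaddressed.
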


To prove Theorem \ref{m8}, we need to characterize $\lambda$-RPG2 bases with the introduction of the so-called reverse partial symmetry for largest coefficients (Definition \ref{d1}). 
As a corollary of our characterization, we characterize bases that satisfy \eqref{e1} w.const $\mathbf C = 1$ in the same manner as Berasategui et al. characterized strong partially greedy bases w.const $1$ \cite{BBL}.

\begin{cor}\label{ca1}
A basis is $1$-RPG2 w.const $1$ if and only if it is RPG w.const $1$.
\end{cor}
For the full statement, see Corollary \ref{ca2}.

\section{A characterization of RPG and PG bases}

We recall some well-known results that will be used in due course.

\begin{defi}[Konyagin and Temlyakov \cite{KT1}]\label{d3}\normalfont
A basis $\mathcal{B}$ is \textbf{quasi-greedy} w.const $\mathbf C > 0$ if
$$\|P_\Lambda(x)\|\ \leqslant\ \mathbf C\|x\|,\forall x\in\mathbb{X}, m\in\mathbb{N}, \forall \Lambda\in G(x, m).$$
The least such $\mathbf C$ is denoted by $\mathbf C_q$, called the quasi-greedy constant. Also when $\mathcal{B}$ is quasi-greedy, let $\mathbf C_\ell$ be the least constant such that $$\|x-P_\Lambda(x)\|\ \leqslant\ \mathbf \mathbf \mathbf C_\ell\|x\|,\forall x\in\mathbb{X}, m\in\mathbb{N}, \forall \Lambda\in G(x, m).$$
We call $\mathbf C_\ell$ the suppression quasi-greedy constant.
\end{defi}

A sequence $\varepsilon = (\varepsilon_n)_n$ is called a sign if $\varepsilon_n\in\mathbb{K}$ and $|\varepsilon_n| = 1$. For $x\in \mathbb{X}$ and a finite set $A\subset\mathbb{N}$, let
$$1_A \ =\  \sum_{n\in A} e_n, 1_{\varepsilon A} \ =\ \sum_{n\in A}\varepsilon_n e_n, \mbox{ and }P_{A^c}(x)\ =\ x - P_A(x).$$

Two important properties of quasi-greedy bases are the \textbf{UL property} and the uniform boundedness of the truncation operators. 
\begin{enumerate}
    \item[i)] UL property: for all finite $A\subset\mathbb{N}$ and scalars $(a_n)$, we have
$$\frac{1}{2\mathbf C_q}\min |a_n|\|1_A\|\ \leqslant\ \left\|\sum_{n\in A}a_ne_n\right\|\ \leqslant\ 2\mathbf C_q\max|a_n|\|1_A\|,$$
which was first proved in \cite{DKKT}. 

    \item[ii)] the uniform boundedness of the truncation operator: for each $\alpha > 0$, we define the truncation function $T_\alpha$ as follows: for  $b\in\mathbb{K}$,
$$T_{\alpha}(b)\ =\ \begin{cases}\sgn(b)\alpha, &\mbox{ if }|b| > \alpha,\\ b, &\mbox{ if }|b|\leqslant \alpha.\end{cases}$$
We define the truncation operator $T_\alpha: \mathbb{X}\rightarrow \mathbb{X}$ as
$$T_{\alpha}(x)\ =\ \sum_{n=1}^\infty T_\alpha(e_n^*(x))e_n \ =\ \alpha 1_{\varepsilon \Lambda_{\alpha}(x)}+ P_{\Lambda_\alpha^c(x)}(x),$$
where $\Lambda_\alpha(x) = \{n: |e_n^*(x)| > \alpha\}$ and $\varepsilon_n = \sgn(e_n^*(x))$ for all $n\in \Lambda_\alpha(x)$.

\begin{thm}\label{bto}\cite[Lemma 2.5]{BBG} Let $\mathcal{B}$ be suppression quasi-greedy w.const $\mathbf C_\ell$. Then for any $\alpha > 0$, $\|T_\alpha\|\leqslant \mathbf{C}_\ell$.
\end{thm}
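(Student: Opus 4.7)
The strategy is to realize $T_\alpha(x)$ as an explicit convex combination of $x$ together with vectors of the form $x - P_{A_j}(x)$, where each $A_j$ is a greedy set of $x$, and then apply Definition \ref{d3} (suppression quasi-greedy) to each piece. By density I may assume $x$ has finite support.

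Let $\Lambda := \Lambda_\alpha(x)$. If $\Lambda = \emptyset$ there is nothing to show, so assume otherwise and enumerate the distinct values of $\{|e_n^*(x)| : n \in \Lambda\}$ as $\beta_1 > \beta_2 > \cdots > \beta_k > \alpha =: \beta_{k+1}$, and put $A_j := \{n : |e_n^*(x)| \geqslant \beta_j\}$. Then $A_1 \subsetneq A_2 \subsetneq \cdots \subsetneq A_k = \Lambda$, and each $A_j$ is a greedy set of $x$: every coefficient on $A_j$ has modulus at least $\beta_j > \alpha$, while every coefficient off $A_j$ has modulus at most $\beta_{j+1} \leqslant \beta_j$.

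The crux of the argument is the identity
$$T_\alpha(x) \;=\; t_0\, x \;+\; \sum_{j=1}^{k} t_j\bigl(x - P_{A_j}(x)\bigr),$$
where $t_0 := \alpha/\beta_1$, $t_i := \alpha\bigl(1/\beta_{i+1} - 1/\beta_i\bigr)$ for $1 \leqslant i \leqslant k-1$, and $t_k := 1 - \alpha/\beta_k$. One checks immediately that $t_j \geqslant 0$ for every $j$ (using $\beta_i > \beta_{i+1}$) and that the weights telescope to $\sum_{j=0}^k t_j = 1$, so this is a genuine convex combination. To verify the identity I compute the coefficient at $e_n$ on the right: for $n \in A_i \setminus A_{i-1}$, using that $\sum_{j \geqslant i} t_j = 1 - \alpha/\beta_i$, the coefficient equals $a_n - a_n(1 - \alpha/\beta_i) = a_n \cdot \alpha/\beta_i = \alpha\varepsilon_n$; for $n \notin \Lambda$, it equals $a_n$. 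These match $T_\alpha(x) = \alpha 1_{\varepsilon \Lambda} + P_{\Lambda^c}(x)$.

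Applying the $\mathbf{C}_\ell$-suppression quasi-greedy bound to each greedy set $A_j$ of $x$ gives $\|x - P_{A_j}(x)\| \leqslant \mathbf{C}_\ell \|x\|$, and the triangle inequality combined with convexity yields
$$\|T_\alpha(x)\| \;\leqslant\; t_0 \|x\| + \mathbf{C}_\ell(1 - t_0)\|x\| \;=\; \bigl(\mathbf{C}_\ell - t_0(\mathbf{C}_\ell - 1)\bigr)\|x\| \;\leqslant\; \mathbf{C}_\ell \|x\|,$$
since $\mathbf{C}_\ell \geqslant 1$ and $t_0 \geqslant 0$. The main obstacle is guessing the right weights $t_j$ that make the decomposition a true convex combination of $x$ and the suppression-type vectors $x - P_{A_j}(x)$; once that algebraic identity is in place, the analytic part is a single invocation of Definition \ref{d3} applied term by term.
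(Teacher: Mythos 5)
The paper cites this result as \cite[Lemma 2.5]{BBG} without reproducing a proof, so there is no internal argument to compare against. Your decomposition of $T_\alpha(x)$ as a convex combination of $x$ and the suppression-type vectors $x - P_{A_j}(x)$ over the nested chain of greedy sets $A_1 \subsetneq \cdots \subsetneq A_k = \Lambda_\alpha(x)$ is correct and is essentially the standard Abel-summation proof of this lemma; note only that the finite-support reduction is unnecessary, since $\Lambda_\alpha(x)$ is already finite for every $x \in \mathbb{X}$ (because $e_n^*(x) \to 0$), so the argument applies verbatim to arbitrary $x$.
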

\end{enumerate}

Now we recall a characterization of RPG bases due to Dilworth and Khurana \cite{DK}. 

\begin{defi}\normalfont
A basis $\mathcal{B}$ is said to be \textbf{reverse conservative} w.const $\mathbf C > 0$ if we have
$$\|1_A\|\ \leqslant\ \mathbf C\|1_B\|, $$
for all finite sets $A, B\subset\mathbb{N}$ with $|A|\leqslant |B|$ and $B < A$. In this case, the least constant $\mathbf C$ is denoted by $\Delta_{rc}$. 
\end{defi}

\begin{thm}\label{m1}\cite[Theorem 2.7]{DK}
A basis is RPG if and only if it is reverse conservative and quasi-greedy. 
\end{thm}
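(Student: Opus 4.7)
The plan is to prove each direction separately, reading off the two necessary properties in the forward direction via clever test vectors, and in the backward direction decomposing the tail $x-P_\Lambda(x)$ into two pieces, each controlled by one of the two hypotheses.

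For the forward direction, I would first recover quasi-greediness by taking $A=\emptyset$ in the infimum defining $\widetilde{\sigma}_m^{R,\Lambda}(x)$; since $\emptyset>\Lambda$ vacuously and $|\emptyset|=0\leqslant m$, this choice is admissible and yields $\|x-P_\Lambda(x)\|\leqslant \mathbf{C}\|x\|$ for every greedy set $\Lambda$, i.e., suppression quasi-greediness (whence quasi-greediness). For reverse conservativeness, given finite sets $B<A$ with $|A|\leqslant |B|$, I would set $x=1_A+1_B$, $m=|B|$, and $\Lambda=B$; since all coefficients of $x$ have modulus $1$, $\Lambda$ is a greedy set. Then $\|x-P_\Lambda(x)\|=\|1_A\|$, and the choice $I=A$ is admissible in $\widetilde{\sigma}_m^{R,\Lambda}(x)$ because $A>\Lambda$ and $|A|\leqslant m$; hence $\widetilde{\sigma}_m^{R,\Lambda}(x)\leqslant \|x-P_A(x)\|=\|1_B\|$, giving $\|1_A\|\leqslant \mathbf{C}\|1_B\|$.

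For the backward direction, fix $\Lambda\in G(x,m)$ and an admissible $I$ (so $I>\Lambda$ and $|I|\leqslant m$), and set $Z=(\Lambda\cup I)^c$. The two parallel decompositions
\[
x-P_\Lambda(x)\ =\ P_I(x)+P_Z(x),\qquad y\ :=\ x-P_I(x)\ =\ P_\Lambda(x)+P_Z(x)
\]
form the backbone of the argument. The key observation is that $\Lambda$ remains a greedy set for $y$: the coefficients of $y$ agree with those of $x$ on $\Lambda\cup Z$ and vanish on $I$, so with $\alpha=\min_{n\in\Lambda}|e_n^*(x)|$ one has $\min_{n\in\Lambda}|e_n^*(y)|=\alpha\geqslant \max_{n\in Z}|e_n^*(x)|=\max_{n\notin\Lambda}|e_n^*(y)|$. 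Applying suppression quasi-greediness to $y$ then gives $\|P_Z(x)\|=\|y-P_\Lambda(y)\|\leqslant \mathbf{C}_\ell\|y\|$.

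It remains to bound $\|P_I(x)\|$ by a multiple of $\|y\|$, and I would chain the remaining tools: the UL property gives $\|P_I(x)\|\leqslant 2\mathbf{C}_q\alpha\|1_I\|$ (since $|e_n^*(x)|\leqslant \alpha$ on $I$); reverse conservativeness gives $\|1_I\|\leqslant \Delta_{rc}\|1_\Lambda\|$ (as $\Lambda<I$ and $|I|\leqslant m=|\Lambda|$); and UL combined with quasi-greediness applied to $y$ gives $\alpha\|1_\Lambda\|\leqslant 2\mathbf{C}_q\|P_\Lambda(x)\|=2\mathbf{C}_q\|P_\Lambda(y)\|\leqslant 2\mathbf{C}_q^2\|y\|$. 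Summing, $\|x-P_\Lambda(x)\|\leqslant (\mathbf{C}_\ell+4\mathbf{C}_q^3\Delta_{rc})\|y\|$. The main obstacle I anticipate is precisely identifying the right auxiliary vector $y$ and noting that $\Lambda$ remains greedy for $y$; without this observation one is tempted to compare $\|P_\Lambda(x)\|$ directly with $\|x\|$, which loses control of the cancellation between $P_\Lambda(x)$ and $P_I(x)$ and prevents the reverse conservative estimate from being applied cleanly.
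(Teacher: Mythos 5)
Your proof is correct, but note that the paper does not actually prove Theorem \ref{m1}: it is cited directly as \cite[Theorem 2.7]{DK}, so there is no in-paper proof to compare against. That said, the technique you use in the backward direction is exactly the one the paper deploys for the closely related Theorem \ref{m2} (the equivalence \eqref{e1} $\Leftrightarrow$ RPG), so the comparison is still informative.

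Your forward direction (test vectors $A=\emptyset$ for quasi-greediness, $x=1_A+1_B$, $\Lambda=B$ for reverse conservativeness) is standard and correct. In the backward direction you introduce $y = x - P_I(x)$, observe that $\Lambda$ remains a greedy set for $y$, and bound the two pieces $P_Z(x) = y - P_\Lambda(y)$ and $P_I(x)$ each by a multiple of $\|y\|$. The paper's proof of Theorem \ref{m2} instead starts from the three-term triangle inequality
\[
\|x-P_\Lambda(x)\|\ \leqslant\ \|x-P_I(x)\| + \|P_{\Lambda\backslash I}(x)\| + \|P_{I\backslash\Lambda}(x)\|,
\]
and then bounds each of the last two terms by $\|x-P_I(x)\|$ using the greediness of $\Lambda\backslash I$ for $x-P_I(x)$ and the UL/reverse-conservative chain you also use. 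Since here $I>\Lambda$ forces $\Lambda\backslash I=\Lambda$ and $I\backslash\Lambda=I$, the two routes compute the same estimates; yours avoids the initial triangle inequality by noting $x-P_\Lambda(x)=P_I(x)+P_Z(x)$ directly, yielding the marginally sharper constant $\mathbf{C}_\ell + 4\mathbf{C}_q^3\Delta_{rc}$ in place of $1+\mathbf{C}_q + 4\mathbf{C}_q^3\Delta_{rc}$. The content---the identification of the auxiliary vector $y$, the observation that $\Lambda$ stays greedy for $y$, and the UL plus reverse-conservativeness chain $\|1_I\|\lesssim\|1_\Lambda\|$---is the same. No gaps.
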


We are ready to prove our first main result.

\begin{proof}[Proof of Theorem \ref{m2}]
Assume \eqref{e1}. By Theorem \ref{m1}, we need to show that $\mathcal{B}$ is both quasi-greedy and reverse conservative. First, we observe that $\mathcal{B}$ is suppression quasi-greedy w.const $\mathbf C$ since by definition, $\widecheck{\sigma}^{\Lambda}_m(x) \leqslant \|x\|$ for all $x\in\mathbb{X}$, $m\in\mathbb{N}$, and $\Lambda\in G(x, m)$. Next, we show that $\mathcal{B}$ is reverse conservative. Let $A, B\subset\mathbb{N}$ be finite with $B < A$ and $|A|\leqslant |B|$. If $A = \emptyset$, there is nothing to prove. Suppose $A\neq \emptyset$. Define $x = 1_A + 1_B + 1_D$, where $D = [\min A, \max A]\backslash A$. Since $B\cup D$ is a greedy set of $x$ and $A\cup D$ is an interval of length $|A\cup D|\leqslant |B\cup D|$ with $B\cup D < \max(A\cup D)$, we have
$$\|1_A\|\ =\ \|x - P_{B\cup D}(x)\|\ \leqslant\ \mathbf C\widecheck{\sigma}^{B\cup D}_{|B\cup D|}(x)\ \leqslant\ \mathbf C\|x-P_{A\cup D}(x)\|\ =\ \mathbf C\|1_B\|.$$

Now we assume that $\mathcal{B}$ is RPG. According to Theorem \ref{m1}, $\mathcal{B}$ is quasi-greedy w.const $\mathbf C_q$ and reverse conservative w.const $\Delta_{rc}$ for some $\mathbf C_q, \Delta_{rc}> 0$. Fix $x\in \mathbb{X}$, $m\in\mathbb{N}$, $\Lambda\in G(x,m)$, and a nonempty interval $I$ with $|I|\leqslant m$, $\Lambda\leqslant \max I$. We shall show that $$\|x-P_{\Lambda}(x)\|\ \leqslant\ (1+\mathbf C_q + 4\mathbf C^3_q\Delta_{rc})\|x-P_I(x)\|.$$
Write 
\begin{equation}\label{e2}\|x-P_{\Lambda}(x)\|\ \leqslant\ \|x-P_I(x)\| + \|P_{\Lambda\backslash I}(x)\| + \|P_{I\backslash \Lambda}(x)\|.\end{equation}
Since $\Lambda\backslash I$ is a greedy set of $x - P_I(x)$, we get
\begin{equation}\label{e3}\|P_{\Lambda\backslash I}(x)\|\ \leqslant\ \mathbf C_q\|x-P_I(x)\|.\end{equation}
Let us estimate $\|P_{I\backslash \Lambda}(x)\|$. Note that 
$\Lambda\backslash I < I\backslash \Lambda$. Otherwise, there exists $a\in \Lambda\backslash I$, $b\in I\backslash \Lambda$ such that $a\geqslant b$. Then $a\geqslant \min I$ and $a\leqslant \max \Lambda\leqslant \max I$. Hence, $a\in I$, which contradicts $a\in \Lambda\backslash I$. Furthermore, $|I\backslash \Lambda|\leqslant |\Lambda\backslash I|$ because $|I|\leqslant |\Lambda|$. Since $\mathcal{B}$ is $\Delta_{rc}$-reverse conservative, we get 
$$\|1_{I\backslash \Lambda}\|\ \leqslant\ \Delta_{rc}\|1_{\Lambda\backslash I}\|.$$
We have
\begin{align}\label{e4}
    \left\|\sum_{n\in I\backslash \Lambda}e_n^*(x)e_n\right\|&\ \leqslant\ \max_{n\in I\backslash \Lambda}|e_n^*(x)| \sup_{\varepsilon}\left\|1_{\varepsilon (I\backslash \Lambda)}\right\|\mbox{ by convexity}\nonumber\\
    &\ \leqslant\ 2\mathbf C_q \min_{n\in \Lambda\backslash I} |e_n^*(x)|\left\|1_{ I\backslash \Lambda}\right\|\mbox{ by the UL property}\nonumber\\
    &\ \leqslant\ 2\mathbf C_q\Delta_{rc} \min_{n\in \Lambda\backslash I} |e_n^*(x)|\left\|1_{ \Lambda\backslash I}\right\|\mbox{ by reverse conservativeness}\nonumber\\
    &\ \leqslant\ 4\mathbf C^2_q\Delta_{rc}\|P_{\Lambda\backslash I}(x)\|\mbox{ by the UL property}.
\end{align}
From \eqref{e2}, \eqref{e3}, and \eqref{e4}, we obtain
$$\|x - P_{A}(x)\|\ \leqslant\ (1+\mathbf C_q + 4\mathbf C^3_q\Delta_{rc})\|x - P_I(x)\|,$$
as desired.
\end{proof}

The proof of Theorem \ref{ma1} is the same as the proof of Theorem \ref{m2} with obvious modifications. The proof is left for interested readers. 

\section{Larger greedy sums for RPG bases}

Throughout this section, let $\lambda$ be a real number at least $1$. First, we define the notion of reverse partial symmetry for largest coefficients (RPSLC). For a finite set $A\subset\mathbb{N}$, let $s(A) = 0$ if $A= \emptyset$; otherwise, $s(A) = \max A - \min A + 1$. We can think of $s(A)$ as $|\mbox{co}(A)|$, where $\mbox{co}(A)$ is the integer convex hull of the set $A$. For a collection of sets $(A_i)_{i\in I}$, we write $\sqcup_i A_i$ to mean $A_j\cap A_k = \emptyset$ for any $j, k\in I$. Finally, $\|x\|_\infty:= \max_{n}|e_n^*(x)|$.

\begin{defi}\normalfont
A vector $x\in\mathbb{X}$ is said to surround a finite set $A\subset\mathbb{N}$ if either $A = \emptyset$ or $\supp(x)\cap [\min A, \max A] = \emptyset$.
\end{defi}

\begin{defi}\label{d1}\normalfont
A basis is said to be $\lambda$-RPSLC if there exists a constant $\mathbf C \geqslant 1$ such that
$$\|x + 1_{\varepsilon A}\|\ \leqslant\ \mathbf C\|x+1_{\delta B}\|,$$
for all $x\in\mathbb{X}$ with $\|x\|_\infty\leqslant 1$, for all signs $\varepsilon, \delta$, and for all finite sets $A, B\subset\mathbb{N}$ such that $(\lambda - 1)s(A) + |A|\leqslant |B|$, $B\sqcup \supp(x)$, $B < A$, and $x$ surrounds $A$. In this case, the least such $\mathbf C$ is denoted by $\Delta_{\lambda, rpl}$.
\end{defi}

\begin{rek}\normalfont
While the definition of $\lambda$-RPSLC may seem unnatural and technical, we shall use it in proving Corollary \ref{ca2} and eventually Theorem \ref{m8}, both of which are relevant to the literature on greedy-type bases. 
\end{rek}

We have an easy but useful reformulation of $\lambda$-RPSLC.

\begin{prop}\label{p1}
A basis $\mathcal{B}$ is $\lambda$-RPSLC w.const $\Delta_{\lambda, rpl}$ if and only if 
\begin{equation}\label{e6}\|x\|\ \leqslant\ \Delta_{\lambda, rpl}\|x - P_A(x) + 1_{\varepsilon B}\|,\end{equation}
for all $x\in\mathbb{X}$ with $\|x\|_\infty\leqslant 1$, for all sign $\varepsilon$, and for all finite sets $A, B\subset\mathbb{N}$
such that $(\lambda - 1)s(A) + |A|\leqslant |B|$, $B\sqcup \supp(x-P_A(x))$, $B < A$, and $x-P_A(x)$ surrounds $A$. 
\end{prop}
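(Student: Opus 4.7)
My plan is to establish both directions of the equivalence by a direct argument: a convexity step for the forward direction and a simple substitution for the reverse. For the forward direction, I assume that $\mathcal{B}$ is $\Delta_{\lambda, rpl}$-($\lambda$, RPSLC) and fix $x, A, B, \varepsilon$ as in the hypothesis of \eqref{e6}. I set $z := x - P_A(x)$ and observe that $\|z\|_\infty \leqslant \|x\|_\infty \leqslant 1$ and that $z$ inherits every structural condition from Definition \ref{d1}: it surrounds $A$, $B \sqcup \supp(z)$, $B < A$, and $(\lambda - 1)s(A) + |A| \leqslant |B|$. Writing $a_n := e_n^*(x)$ for $n \in A$, each $a_n$ lies in the closed unit ball of $\mathbb{K}$, hence is the barycenter of a probability measure supported on $\{\zeta \in \mathbb{K} : |\zeta| = 1\}$. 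Applying this coordinate-wise and using convexity of the norm,
$$\|x\|\ =\ \left\|z + \sum_{n \in A} a_n e_n\right\|\ \leqslant\ \sup_{\varepsilon'} \|z + 1_{\varepsilon' A}\|,$$
where the supremum runs over signs $\varepsilon'$ on $A$. For each such $\varepsilon'$, Definition \ref{d1} applied to $z$ with signs $\varepsilon'$ on $A$ and $\varepsilon$ on $B$ yields $\|z + 1_{\varepsilon' A}\| \leqslant \Delta_{\lambda, rpl}\|z + 1_{\varepsilon B}\|$, which combines with the previous display to give \eqref{e6}.

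For the reverse direction, I assume \eqref{e6} and pick $x, A, B, \varepsilon, \delta$ satisfying the hypotheses of Definition \ref{d1}. Setting $x' := x + 1_{\varepsilon A}$, the condition that $x$ surrounds $A$ forces $\supp(x)$ and $A$ to be disjoint, so $\|x'\|_\infty \leqslant 1$, $P_A(x') = 1_{\varepsilon A}$, and $x' - P_A(x') = x$. All the remaining structural hypotheses required by \eqref{e6} for the quadruple $(x', A, B, \delta)$ (that $x$ surrounds $A$, that $B \sqcup \supp(x)$, that $B < A$, and the cardinality inequality) hold by our assumption on $(x, A, B)$. Applying \eqref{e6} then yields
$$\|x + 1_{\varepsilon A}\|\ =\ \|x'\|\ \leqslant\ \Delta_{\lambda, rpl}\|x' - P_A(x') + 1_{\delta B}\|\ =\ \Delta_{\lambda, rpl}\|x + 1_{\delta B}\|,$$
which is precisely the bound defining $\Delta_{\lambda, rpl}$-($\lambda$, RPSLC).

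I expect no serious obstacle: the equivalence essentially records that the operations $x \mapsto x + 1_{\varepsilon A}$ and $y \mapsto y - P_A(y)$ are mutually inverse on vectors whose support misses $[\min A, \max A]$, and that the coefficient bound $\|x\|_\infty \leqslant 1$ is exactly what converts arbitrary coefficients on $A$ into a supremum over pure signs via convexity. The only point requiring mild care is that the convexity step must be valid over both $\mathbb{R}$ and $\mathbb{C}$, but this is handled by the barycenter-on-the-unit-circle formulation above.
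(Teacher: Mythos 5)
Your proof is correct and follows essentially the same route as the paper's: in the forward direction you write $x = (x - P_A(x)) + \sum_{n\in A} e_n^*(x)e_n$, replace the coefficients on $A$ by signs via convexity, and invoke the RPSLC inequality; in the reverse direction you substitute $x' = x + 1_{\varepsilon A}$ and observe that $x' - P_A(x') = x$. The only difference is that you spell out the convexity step (barycenter over the unit circle) and the verification of the hypotheses more explicitly than the paper, which the paper leaves implicit.
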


\begin{proof}
Assume that $\mathcal{B}$ is $\lambda$-RPSLC w.const $\Delta_{\lambda, rpl}$. Let $x, A, B, \varepsilon$ be chosen as in \eqref{e6}. We have
\begin{align*}
    \|x\|\ =\ \left\|x - P_A(x) + \sum_{n\in A}e_n^*(x)e_n\right\|&\ \leqslant\ \sup_{\delta}\left\|x-P_A(x) + 1_{\delta A}\right\|\\
    &\ \leqslant\ \Delta_{\lambda, rpl}\|x-P_A(x) + 1_{\varepsilon B}\|.
\end{align*}
Next, assume that $\mathcal{B}$ satisfies \eqref{e6}. Let $x, A, B, \varepsilon, \delta$ be chosen as in Definition \ref{d1}. Let $y = x+ 1_{\varepsilon A}$. By \eqref{e6}, 
\begin{align*}
    \|x+1_{\varepsilon A}\|\ =\ \|y\|\ \leqslant\ \Delta_{\lambda, rpl}\|y - P_A(y) + 1_{\delta B}\|\ =\ \Delta_{\lambda, rpl}\|x+1_{\delta B}\|.
\end{align*}
This completes our proof. 
\end{proof}

The next theorem characterizes $\lambda$-RPG2 bases. 

\begin{thm}\label{m3}
A basis $\mathcal{B}$ is $\lambda$-RPG2 if and only it is quasi-greedy and $\lambda$-RPSLC.
\end{thm}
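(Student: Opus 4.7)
The plan is to prove the two implications separately, with quasi-greediness deduced from \eqref{e5} serving as a bridge in both directions.

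\emph{Forward direction.} I would first extract quasi-greediness from \eqref{e5} by a padding argument: given a greedy set $\Lambda$ of $y$ of arbitrary size $k$, choose $m := \lceil k/\lambda\rceil$ so that the pad size $|E| := \lceil\lambda m\rceil - k \leq \lceil\lambda\rceil$ is bounded, then form $z := y + t\, 1_{\eta E}$ with $t := \min_{n\in\Lambda}|e_n^*(y)|$ and $E$ disjoint from $\supp(y)$. Now $\Lambda\cup E$ is greedy of $z$ of size $\lceil\lambda m\rceil$, and \eqref{e5} with $I = \emptyset$ gives $\|y - P_\Lambda(y)\| = \|z - P_{\Lambda\cup E}(z)\| \leq \mathbf{C}_{\lambda, rp}\|z\|$, where $\|z\|\leq (1 + c_2^2\lceil\lambda\rceil)\|y\|$ using $t\leq c_2\|y\|$ and $\|1_E\|\leq c_2|E|$. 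Next, to prove ($\lambda$, RPSLC), fix $x, \varepsilon, \delta, A, B$ as in Definition \ref{d1} and let $D := [\min A, \max A]\setminus A$. The crucial integrality observation is that since $|B|\in\mathbb{N}$, the hypothesis $(\lambda-1)s(A) + |A|\leq |B|$ upgrades to $|B|\geq \lceil(\lambda-1)s(A)\rceil + |A| = \lceil\lambda s(A)\rceil - |D|$. I would then pick $B_0\subset B$ with $|B_0| = \lceil\lambda s(A)\rceil - |D|$ and set $y := x + 1_{\varepsilon A} + 1_{\delta B_0} + 1_{\xi D}$ for any sign $\xi$. With $I := [\min A, \max A]$ (so $|I| = s(A) =: m$) and $\Lambda := B_0\cup D$ (so $|\Lambda| = \lceil\lambda m\rceil$), the conditions $\|x\|_\infty\leq 1$, $x$ surrounds $A$, and $B_0 < A$ yield that $\Lambda$ is a greedy set of $y$ with $\Lambda\leq \max I$, while $y - P_\Lambda(y) = x + 1_{\varepsilon A}$ and $y - P_I(y) = x + 1_{\delta B_0}$. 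Then \eqref{e5} gives $\|x + 1_{\varepsilon A}\|\leq \mathbf{C}_{\lambda, rp}\|x + 1_{\delta B_0}\|$, and suppression quasi-greediness (applied with $B\setminus B_0$ as a greedy set of $x + 1_{\delta B}$) bridges to $\|x + 1_{\delta B_0}\|\leq \mathbf{C}_\ell\|x + 1_{\delta B}\|$.

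\emph{Reverse direction.} Fix $x$, $m$, $\Lambda\in G(x, \lceil\lambda m\rceil)$, and a valid $I$. The case $I = \emptyset$ is immediate from suppression quasi-greediness. For nonempty $I$, I would decompose $A_0 := \Lambda\cap I$, $A_1 := \Lambda\setminus I$, $A_2 := I\setminus\Lambda$; since $I$ is an interval and $\Lambda\leq\max I$, one has $A_1\subset [1, \min I - 1]$, so $A_1 < A_2$. The triangle inequality reduces the estimate to controlling $\|P_{A_1}(x)\|$ and $\|P_{A_2}(x)\|$ in terms of $\|x - P_I(x)\|$. Quasi-greediness applied to $x - P_I(x)$ (for which $A_1$ is a greedy set) handles the former. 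For the latter, I would chain the UL property, then ($\lambda$, RPSLC) with $z = 0$ and the assignment $(A, B) \leftarrow (A_2, A_1)$, whose cardinality hypothesis $(\lambda-1)s(A_2) + |A_2|\leq |A_1|$ follows from $|A_1| - |A_2| = |\Lambda| - |I|\geq (\lambda-1)m\geq (\lambda-1)s(A_2)$, producing $\|1_{A_2}\|\leq \Delta_{\lambda, rpl}\|1_{A_1}\|$; a second application of UL then yields $\|P_{A_2}(x)\|\leq 4\mathbf{C}_q^3\Delta_{\lambda, rpl}\|x - P_I(x)\|$.

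The main obstacle is the cardinality bookkeeping in the forward direction for ($\lambda$, RPSLC): because $\{\lceil\lambda m\rceil : m\in\mathbb{N}\}$ is non-surjective in $\mathbb{N}$, one cannot simply take $\Lambda = B$ and $I = [\min A, \max A]$ in \eqref{e5}. The resolution combines (i) passing to a subset $B_0\subset B$ of precisely cardinality $\lceil\lambda s(A)\rceil - |D|$, which is feasible thanks to the integrality of $|B|$ (this is exactly what motivates the specific form of the bound in Definition \ref{d1}), and (ii) filling the gap set $D$ with unit-modulus coefficients so that $|B_0\cup D| = \lceil\lambda s(A)\rceil$ exactly, with suppression quasi-greediness providing the final bridge between $B_0$ and $B$.
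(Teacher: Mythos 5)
Your proof is correct and amounts to a self-contained re-derivation of the content the paper factors into Theorem~\ref{m4} plus the citation of \cite[Lemma 2.3]{C1} for quasi-greediness (which your padding argument reproves, after the usual density reduction to finitely supported $y$ so that a pad set $E$ disjoint from $\supp(y)$ exists). The forward direction closely mirrors the paper's proof of Theorem~\ref{m4}(ii): the paper forms $y$ from the full set $B$ and then shrinks the greedy set to a subset $\Lambda\subset B\cup D$ of cardinality $\lceil\lambda s(A)\rceil$, using suppression quasi-greediness to pass from $P_{B\cup D}(y)$ to $P_\Lambda(y)$, whereas you shrink $B$ to $B_0$ with $|B_0\cup D|=\lceil\lambda s(A)\rceil$ before building $y$, apply \eqref{e5} directly, and bridge $\|x+1_{\delta B_0}\|\leqslant\mathbf C_\ell\|x+1_{\delta B}\|$ at the end; this is the same integrality bookkeeping, just reordered. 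You should treat $A=\emptyset$ separately, as the paper does, since then $I=[\min A,\max A]$ and $m=s(A)=0$ are not admissible; that case reduces immediately to suppression quasi-greediness. The reverse direction is where you genuinely diverge from the paper: Theorem~\ref{m4}(iii) is proved via the truncation operator $T_\alpha$ together with Proposition~\ref{p1} and Theorem~\ref{bto}, yielding the tight constant $\mathbf C_\ell\Delta_{\lambda, rpl}$, whereas you split through $A_1=\Lambda\setminus I$, $A_2=I\setminus\Lambda$, control $\|P_{A_1}(x)\|$ by quasi-greediness and $\|P_{A_2}(x)\|$ by a UL--(reverse conservative)--UL chain, exactly in the style of the paper's proof of Theorem~\ref{m2}, giving the constant $1+\mathbf C_q+4\mathbf C_q^3\Delta_{\lambda, rpl}$. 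Both routes work; yours is more elementary and avoids the truncation-operator lemma, at the cost of a worse constant.
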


The next theorem, which describes the relation between constants of the $\lambda$-RPG2 property and RPSLC, facilitates the proof of Theorem \ref{m3}.

\begin{thm}(Analog of \cite[Theorem 4.1]{C1})\label{m4}
Let $\mathcal{B}$ be suppression quasi-greedy w.const $\mathbf{C}_\ell$. The following hold.
\begin{itemize}
    \item [i)] If $\mathcal{B}$ is $1$-RPG2 w.const $\mathbf C_{1, rp2}$, then $\mathcal{B}$ is $1$-RPSLC w.const $\mathbf C_{1, rp2}$.
    \item [ii)] If $\mathcal{B}$ is $\lambda$-RPG2 w.const $\mathbf C_{\lambda, rp2}$, then $\mathcal{B}$ is $\lambda$-RPSLC w.const $\mathbf C_\ell\mathbf C_{\lambda, rp2}$.
    \item [iii)] If $\mathcal{B}$ is $\lambda$-RPSLC w.const $\Delta_{\lambda, rpl}$, then $\mathcal{B}$ is $\lambda$-RPG2 w.const $\mathbf C_\ell\Delta_{\lambda, rpl}$.
\end{itemize}
\end{thm}

\begin{proof}
i) Let $x, A, B, \varepsilon, \delta$ be as in Definition \ref{d1}. 
We need to show that 
$$\|x+1_{\varepsilon A}\|\ \leqslant\ \mathbf C_{1, rp2}\|x+1_{\delta B}\|.$$
If $A = \emptyset$, then 
$$\|x+1_{\varepsilon A}\|\ =\ \|x\|\ \leqslant\ \mathbf C_{\ell}\|x+1_{\delta B}\|\ \leqslant\ \mathbf C_{1, rp2}\|x+1_{\delta B}\|,$$
where the last inequality is due to Definition \ref{bdf}.
If $A \neq \emptyset$, form $y = x + 1_{\varepsilon A} + 1_{\delta B} + 1_D$, where $D = [\min A, \max A]\backslash A$. We have
$$|D\cup A|\ \leqslant\ |D\cup B|\mbox{ and }D\cup B < \max(D\cup A).$$
Hence, we obtain
\begin{align*}\|x+1_{\varepsilon A}\|\ =\ \|y - P_{B\cup D}(y)\|\ \leqslant\ \mathbf C_{1, rp2}\widecheck{\sigma}^{B\cup D}_{|B\cup D|}(y)&\ \leqslant\ \mathbf C_{1, rp2}\|y- P_{A\cup D}(y)\|\\
&\ =\ \mathbf C_{1, rp2}\|x + 1_{\delta B}\|,
\end{align*}
as desired.

ii) Let $x, A, B, \varepsilon, \delta$ be as in Definition \ref{d1}. If $A = \emptyset$, then by the proof of item i), we are done. If $A \neq\emptyset$, form $y = x + 1_{\varepsilon A} + 1_{\delta B} + 1_D$, where $D = [\min A, \max A]\backslash A$. Observe that $B\cup D$ is a greedy set of $y$, and 
$$|B\cup D| \ =\ |B| + |D|\ \geqslant\ (\lambda-1)s(A) + |A| + (s(A) - |A|)\ =\ \lambda s(A).$$
Choose $\Lambda\subset B\cup D$ such that $|\Lambda| = \lceil \lambda s(A)\rceil$. Clearly, 
$$|A\cup D|\ =\ s(A) \mbox{ and }\Lambda\ <\ \max (A\cup D).$$
We obtain
\begin{align*}\|x+1_{\varepsilon A}\|\ =\ \|y - P_{B\cup D}(y)\|\ \leqslant\ \mathbf C_\ell\|y-P_{\Lambda}(y)\|&\ \leqslant\ \mathbf C_\ell\mathbf C_{\lambda, rp2}\widecheck{\sigma}^{\Lambda}_{s(A)}(y)\\
&\ \leqslant\  \mathbf C_\ell\mathbf C_{\lambda, rp2}\|y - P_{A\cup D}(y)\|\\
&\ =\ \mathbf C_\ell\mathbf C_{\lambda, rp2}\|x+1_{\delta B}\|.
\end{align*}
Therefore, $\mathcal{B}$ is $\lambda$-RPSLC w.const $\mathbf C_\ell\mathbf C_{\lambda, rp2}$.

iii) Assume that $\mathcal{B}$ is suppression quasi-greedy w.const $\mathbf C_\ell$ and $\lambda$-RPSLC w.const $\Delta_{\lambda, rpl}$. Let $x\in\mathbb{X}$, $m\in\mathbb{N}$, $\Lambda\in G(x, \lceil \lambda m\rceil)$, and a nonempty interval $I$ with $|I|\leqslant m$ and $\Lambda\leqslant \max I$. We need to show that 
$$\|x-P_{\Lambda}(x)\|\ \leqslant\ \mathbf C_\ell \Delta_{\lambda, rpl}\|x - P_I(x)\|.$$
If $\alpha := \min_{n\in\Lambda} |e_n^*(x)|$, then $\|x-P_{\Lambda}(x)\|_\infty\leqslant \alpha$. We have
\begin{align*}
    |\Lambda\backslash I| \ =\ |\Lambda| - |\Lambda\cap I|\ \geqslant\ \lambda m - |\Lambda\cap I| &\ =\ \lambda m + (|I| - |\Lambda\cap I|) - |I|\\
    &\ \geqslant\ \lambda m + |I\backslash \Lambda| - m\\
    &\ =\ (\lambda - 1)m + |I\backslash \Lambda|\\
    &\ \geqslant\ (\lambda-1)s(I\backslash \Lambda) + |I\backslash \Lambda|,
\end{align*}
and 
$$\Lambda\backslash I\ <\ I\backslash \Lambda\mbox{ and }(\Lambda \backslash I)\sqcup \supp(x-P_{\Lambda}(x)-P_{I\backslash \Lambda}(x)).$$
Furthermore, $x-P_{\Lambda}(x)-P_{I\backslash \Lambda}(x)$ surrounds $I\backslash \Lambda$.
Setting $\varepsilon = (\sgn(e_n^*(x))$, we can apply Proposition \ref{p1} to obtain
\begin{align*}
    \|x-P_{\Lambda}(x)\|&\ \leqslant\ \Delta_{\lambda, rpl}\|x-P_{\Lambda}(x) - P_{I\backslash \Lambda}(x) + \alpha 1_{\varepsilon \Lambda\backslash I}\| \\
    &\ =\ \Delta_{\lambda, rpl}\|T_{\alpha}(x-P_{\Lambda}(x) - P_{I\backslash \Lambda}(x) + P_{\Lambda\backslash I}(x))\|\\
    &\ \leqslant\ \mathbf C_\ell \Delta_{\lambda, rpl}\|x-P_I(x)\| \mbox{ by Theorem \ref{bto}.}
\end{align*}
We finish the case when $I \neq \emptyset$. If $I  = \emptyset$, then we simply have $\|x-P_{\Lambda}(x)\|\leqslant \mathbf C_\ell\|x\| = \mathbf C_\ell\|x-P_I(x)\|$. 
\end{proof}

\begin{proof}[Proof of Theorem \ref{m3}]
By Theorem \ref{m4}, we need only to show that a $\lambda$-RPG2 basis is quasi-greedy. Indeed, a $\lambda$-RPG2 basis satisfies \eqref{e5}, which implies that
$$\|x-P_{\Lambda}(x)\|\ \leqslant\ \mathbf C\|x\|, \forall x\in\mathbb{X}, \forall m\in \mathbb{N},\forall \Lambda\in G(x, \lceil \lambda m\rceil).$$
By \cite[Proposition 4.1]{O}, we know that $\mathcal{B}$ is quasi-greedy. 
\end{proof}

\begin{defi}\normalfont
A basis is said to be $\lambda$-reverse conservative of type 2\footnote{To distinguish from \cite[Definition 5.6]{C1}.} if for some constant $\mathbf C > 0$, we have
$$\|1_A\|\ \leqslant\ \mathbf C\|1_B\|, $$
for all finite sets $A, B\subset\mathbb{N}$ with $(\lambda-1)s(A) + |A|\leqslant |B|$ and $B < A$. In this case, the least constant $\mathbf C$ is denoted by $\Delta_{\lambda, rc}$. 
\end{defi}

\begin{prop}\label{p2}
Let $\mathcal{B}$ be a quasi-greedy basis. Then $\mathcal{B}$ is $\lambda$-reverse conservative of type 2 if and only if it is $\lambda$-RPSLC. 
\end{prop}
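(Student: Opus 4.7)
The forward direction is immediate: if $\mathcal B$ is $\Delta_{\lambda,rpl}$-($\lambda$, RPSLC), take $x=0$ and choose $\varepsilon,\delta$ to be the all-ones sign. The hypotheses of Definition \ref{d1} reduce exactly to the hypotheses of the definition of ($\lambda$, reverse conservative of type II), and the conclusion gives $\|1_A\|\leqslant \Delta_{\lambda,rpl}\|1_B\|$, so $\Delta_{\lambda,rc}\leqslant \Delta_{\lambda,rpl}$.

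For the converse, assume $\mathcal B$ is $\mathbf C_q$-quasi-greedy (with suppression constant $\mathbf C_\ell$) and $\Delta_{\lambda,rc}$-($\lambda$, reverse conservative of type II). Fix $x,A,B,\varepsilon,\delta$ as in Definition \ref{d1}. I want to show
\[
\|x+1_{\varepsilon A}\|\ \leqslant\ \bigl(\mathbf C_\ell+4\mathbf C_q^{3}\Delta_{\lambda,rc}\bigr)\|x+1_{\delta B}\|.
\]
The case $A=\emptyset$ is trivial by suppression quasi-greediness (and forces $B=\emptyset$ too when $B=\emptyset$, by the hypothesis $(\lambda-1)s(A)+|A|\leqslant|B|$), so assume $A\neq\emptyset$ throughout what follows.

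The central observation is that, setting $y:=x+1_{\delta B}$, the set $B$ is a greedy set of $y$. Indeed, $B\sqcup \supp(x)$ yields $e_n^*(y)=\delta_n$ of modulus $1$ for $n\in B$, while for $n\notin B$ one has $|e_n^*(y)|=|e_n^*(x)|\leqslant \|x\|_\infty\leqslant 1$. Hence by suppression quasi-greediness,
\[
\|x\|\ =\ \|y-P_B(y)\|\ \leqslant\ \mathbf C_\ell\|y\|,
\]
and by quasi-greediness,
\[
\|1_{\delta B}\|\ =\ \|P_B(y)\|\ \leqslant\ \mathbf C_q\|y\|.
\]
Now apply the UL property twice, sandwiching an application of reverse conservativeness of type II (which is legitimate since $(\lambda-1)s(A)+|A|\leqslant|B|$ and $B<A$):
\[
\|1_{\varepsilon A}\|\ \leqslant\ 2\mathbf C_q\|1_A\|\ \leqslant\ 2\mathbf C_q\Delta_{\lambda,rc}\|1_B\|\ \leqslant\ 4\mathbf C_q^{2}\Delta_{\lambda,rc}\|1_{\delta B}\|\ \leqslant\ 4\mathbf C_q^{3}\Delta_{\lambda,rc}\|y\|.
\]
The triangle inequality $\|x+1_{\varepsilon A}\|\leqslant \|x\|+\|1_{\varepsilon A}\|$ together with the displays above finishes the estimate.

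There is no real obstacle: the argument is a standard UL$+$truncation-flavored sandwich. The only subtlety to watch is checking that $B$ is genuinely a greedy set of $y$, which depends crucially on the normalization $\|x\|_\infty\leqslant 1$ built into Definition \ref{d1} and on the disjointness $B\sqcup \supp(x)$. Once that is noted, the rest is bookkeeping of constants.
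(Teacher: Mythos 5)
Your proof is correct and follows essentially the same route as the paper: the forward direction by setting $x=0$ with constant signs, and the converse via the chain $\|1_{\varepsilon A}\|\leqslant 2\mathbf C_q\|1_A\|\leqslant 2\mathbf C_q\Delta_{\lambda,rc}\|1_B\|\leqslant 4\mathbf C_q^2\Delta_{\lambda,rc}\|1_{\delta B}\|\leqslant 4\mathbf C_q^3\Delta_{\lambda,rc}\|x+1_{\delta B}\|$ combined with $\|x\|\leqslant\mathbf C_\ell\|x+1_{\delta B}\|$, yielding the same constant $\mathbf C_\ell+4\mathbf C_q^3\Delta_{\lambda,rc}$. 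The only additions are the explicit verification that $B$ is a greedy set of $x+1_{\delta B}$ (which the paper leaves implicit) and an unnecessary but harmless case split on $A=\emptyset$, with a small slip in the parenthetical there ($A=\emptyset$ does not force $B=\emptyset$, but that does not affect the argument).
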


\begin{proof}
Setting $x = 0$ and $\varepsilon \equiv \delta \equiv 1$ in Definition \ref{d1}, we see that a $\lambda$-RPSLC basis is $\lambda$-reverse conservative of type 2. Assume that $\mathcal{B}$ is $\lambda$-reverse conservative of type 2 w.const $\Delta_{\lambda, rc}$, suppression quasi-greedy w.const $\mathbf C_\ell$, and quasi-greedy w.const $\mathbf C_q$. Let $x, A, B, \varepsilon, \delta$ be chosen as in Definition \ref{d1}. We have
$$\|1_{\varepsilon A}\|\ \stackrel{\mbox{UL}}{\leqslant}\ 2\mathbf C_q\|1_{A}\|\ \leqslant\ 2\mathbf C_q\Delta_{\lambda, rc}\|1_B\|\ \stackrel{\mbox{UL}}{\leqslant}\ 4\mathbf C_q^2\Delta_{\lambda, rc}\|1_{\delta B}\|\ \leqslant\ 4\mathbf C_q^3\Delta_{\lambda, rc}\|x+1_{\delta B}\|.$$
Furthermore, 
$$\|x\|\ \leqslant\ \mathbf C_\ell \|x+1_{\delta B}\|.$$
Therefore,
$$\|x+1_{\varepsilon A}\|\ \leqslant\ \|x\| + \|1_{\varepsilon A}\|\ \leqslant\ (4\mathbf C_q^3\Delta_{\lambda, rc} + \mathbf C_\ell)\|x+1_{\delta B}\|.$$
This completes our proof. 
\end{proof}

\begin{thm}\label{m9}
Let $\mathcal{B}$ be a basis. The following are equivalent
\begin{enumerate}
    \item[i)] $\mathcal{B}$ is $\lambda$-RPG2.
    \item[ii)] $\mathcal{B}$ is quasi-greedy and $\lambda$-RPSLC.
    \item[iii)] $\mathcal{B}$ is quasi-greedy and $\lambda$-reverse conservative of type 2. 
\end{enumerate}
\end{thm}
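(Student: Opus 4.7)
The plan is to assemble Theorem \ref{m9} as an immediate corollary of the two results that have just been established, namely Theorem \ref{m3} and Proposition \ref{p2}. No new technical work is needed; the task is simply to chain the two equivalences in the correct logical order and to verify that the hypothesis of quasi-greediness is available wherever it is invoked.

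First I would establish (i) $\Longleftrightarrow$ (ii) by quoting Theorem \ref{m3} verbatim: that theorem states that a basis is ($\lambda$, RPGII) if and only if it is quasi-greedy and ($\lambda$, RPSLC). In particular, the nontrivial direction (i) $\Longrightarrow$ (ii) was handled in the proof of Theorem \ref{m3} by combining item iii) of Theorem \ref{m4} with the fact (from \cite[Lemma 2.3]{C1}) that \eqref{e5} forces quasi-greediness; the reverse direction (ii) $\Longrightarrow$ (i) is item iii) of Theorem \ref{m4} alone.

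Next I would establish (ii) $\Longleftrightarrow$ (iii) by invoking Proposition \ref{p2}. That proposition requires the basis to be quasi-greedy as a standing assumption, which is part of both (ii) and (iii), so the hypothesis is satisfied in either direction. Under quasi-greediness, Proposition \ref{p2} directly equates ($\lambda$, RPSLC) with ($\lambda$, reverse conservative of type II). Combining this with the first step yields (i) $\Longleftrightarrow$ (ii) $\Longleftrightarrow$ (iii), which is exactly the claim.

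There is no genuine obstacle here; the only thing to be careful about is to note, when passing between (ii) and (iii), that the quasi-greedy hypothesis is shared by both conditions so Proposition \ref{p2} applies in both directions. The proof can therefore be written in essentially one or two lines: by Theorem \ref{m3}, (i) is equivalent to (ii); and by Proposition \ref{p2}, under the common quasi-greedy hypothesis, (ii) is equivalent to (iii).
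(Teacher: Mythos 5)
Your proposal is correct and takes exactly the same approach as the paper: it chains Theorem \ref{m3} for the equivalence (i) $\Longleftrightarrow$ (ii) and Proposition \ref{p2} for (ii) $\Longleftrightarrow$ (iii). The paper's proof is identical, stated in one line.
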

\begin{proof}
The equivalence between i) and ii) is due to Theorem \ref{m3}, and the equivalence between ii) and iii) is due to Proposition \ref{p2}.
\end{proof}

The problem of characterizing $1$-greedy-type bases has been of great interest as can be seen in \cite{AA0, AA, AW, BBL, DK}.  
As a corollary of Theorem \ref{m4}, let us characterize bases that are $1$-RPG2 w.const $1$.

\begin{thm}\label{m6}
A basis $\mathcal{B}$ is $1$-RPG2 w.const $1$ if and only if $\mathcal{B}$ is $1$-RPSLC w.const $1$. 
\end{thm}

\begin{proof}
By Theorem \ref{m4} items i) and iii), it suffices to show that if a basis $\mathcal B$ is $1$-RPSLC w.const $1$, then it is suppression quasi-greedy w.const $1$. This follows immediately from Definition \ref{d1} by setting $A = \emptyset$ to have
$$\|x\|\ \leqslant\ \|x + e_k\|,$$
for all $x\in \mathbb{X}$ with $\|x\|_\infty\leqslant 1$ and for all $k\notin \supp(x)$. By induction, $\mathcal{B}$ is suppression quasi-greedy w.const $1$. 
\end{proof}

In the spirit of \cite[Proposition 4.2]{BBL}, we offer yet another characterization of $1$-RPG2 bases w.const $1$.

\begin{thm}\label{m7}
A basis $\mathcal{B}$ is $1$-RPG2 w.const $1$ if and only if $\mathcal{B}$ satisfies simultaneously two following conditions
\begin{enumerate}
    \item[i)] for all $x\in\mathbb{X}$ with $\|x\|_\infty\leqslant 1$ and for all $k\notin \supp(x)$, we have
    \begin{equation}
        \label{e15}\|x\|\ \leqslant\ \|x+e_k\|.
    \end{equation}
    \item[ii)] for $x\in\mathbb{X}$ with $\|x\|_\infty\leqslant 1$, for $s, t\in\mathbb{K}$ with $|s| = |t| = 1$, and for $j < k$, both of which are not in $\supp(x)$, we have
    \begin{equation}\label{e16}\|x+te_k\|\ \leqslant\ \|x+se_j\|.\end{equation}
\end{enumerate}
\end{thm}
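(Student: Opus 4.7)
The plan is to leverage Theorem \ref{m6}, which already identifies $1$-($1$, RPGII) with $1$-($1$, RPSLC); it therefore suffices to show that the conjunction of i) and ii) is equivalent to $1$-($1$, RPSLC). The forward direction is a direct specialization of Definition \ref{d1} (with $\lambda = 1$). Taking $A = \emptyset$, $B = \{k\}$, and $\delta \equiv 1$ reduces all side-conditions to $k \notin \supp(x)$ and $\|x\|_\infty \leqslant 1$, giving i); taking $A = \{k\}$, $B = \{j\}$ with $j < k$ and signs $\varepsilon_k = t$, $\delta_j = s$ gives ii) after noting that $B < A$, $x$ surrounds $A$, and $B\sqcup\supp(x)$ follow from $j,k\notin\supp(x)$.

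For the converse, I first upgrade i) to arbitrary unimodular signs: for $\eta \in \mathbb{K}$ with $|\eta| = 1$, any $x$ with $\|x\|_\infty \leqslant 1$, and any $k \notin \supp(x)$,
\[
\|x\|\ =\ \|\bar{\eta} x\|\ \leqslant\ \|\bar{\eta} x + e_k\|\ =\ \|\eta(\bar{\eta} x + e_k)\|\ =\ \|x + \eta e_k\|,
\]
the middle inequality being i) applied to $\bar{\eta} x$. With this in hand, and given $x,A,B,\varepsilon,\delta$ as in Definition \ref{d1}, write $A = \{a_1 < \cdots < a_p\}$ and $B = \{b_1 < \cdots < b_q\}$ with $p \leqslant q$; the aim is $\|x + 1_{\varepsilon A}\| \leqslant \|x + 1_{\delta B}\|$, which I would establish by a two-stage telescoping argument.

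Stage one pairs $a_i$ with $b_i$ for $i = 1,\ldots,p$ and swaps them from the top down. For $r = 0,1,\ldots,p$ set
\[
z_r\ :=\ x + \sum_{i=1}^{p-r} \varepsilon_{a_i} e_{a_i} + \sum_{i=p-r+1}^{p} \delta_{b_i} e_{b_i},
\]
so $z_0 = x + 1_{\varepsilon A}$. Letting $w_r$ be $z_{r-1}$ stripped of its coordinate at position $a_{p-r+1}$, condition ii) applied to $w_r$ with $j = b_{p-r+1} < a_{p-r+1} = k$ and signs $s = \delta_{b_{p-r+1}}$, $t = \varepsilon_{a_{p-r+1}}$ yields $\|z_{r-1}\| \leqslant \|z_r\|$, and telescoping gives $\|x + 1_{\varepsilon A}\| \leqslant \|z_p\|$. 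Stage two appends the remaining $q - p$ terms of $B$ to $z_p$ one at a time; each step is a single application of the signed upgrade of i) above, telescoping to $\|z_p\| \leqslant \|x + 1_{\delta B}\|$. Chaining the two stages yields $1$-($1$, RPSLC), and hence $1$-($1$, RPGII) via Theorem \ref{m6}.

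The main obstacle is not any single norm estimate but the bookkeeping that certifies each swap: at every step I must verify $\|w_r\|_\infty \leqslant 1$ and that both $b_{p-r+1}$ and $a_{p-r+1}$ avoid $\supp(w_r)$. The position $a_{p-r+1}$ lies in $[a_1,a_p]$, so the surrounds hypothesis $\supp(x)\cap[a_1,a_p] = \emptyset$, together with the distinctness of the $a_i$ and the fact that every $b_j < a_1$, suffices; for $b_{p-r+1}$, I combine $B\cap\supp(x) = \emptyset$, distinctness of the $b_j$, and $B < A$. Once these checks are in place the chain of inequalities closes without further subtlety.
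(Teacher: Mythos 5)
Your proof is correct and follows essentially the same strategy as the paper: reduce via Theorem~\ref{m6} to showing that $1$-($1$, RPSLC) is equivalent to the conjunction of \eqref{e15} and \eqref{e16}, then decompose the passage from $x + 1_{\varepsilon A}$ to $x + 1_{\delta B}$ into a chain of single-coordinate moves (swaps via \eqref{e16}, appends via the signed version of \eqref{e15}). The paper organizes this as an induction on $|B|$ rather than as explicit two-stage telescoping, but the content is the same; your explicit derivation of the signed upgrade of \eqref{e15} makes a step explicit that the paper leaves implicit.
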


\begin{proof}
Due to Theorem \ref{m6}, it suffices to show that a basis is $1$-RPSLC w.const $1$ if and only if it satisfies both \eqref{e15} and \eqref{e16}. It follows immediately from Definition \ref{d1} that a $1$-RPSLC basis w.const $1$ must satisfy \eqref{e15} and \eqref{e16}. Conversely, suppose that $\mathcal{B}$ satisfies both \eqref{e15} and \eqref{e16}. Choose $x, A, B, \varepsilon, \delta$ as in Definition \ref{d1}. We show that 
\begin{equation}\label{e17}\|x+1_{\varepsilon A}\|\ \leqslant\ \|x + 1_{\delta B}\|\end{equation}
inductively on $|B|$. 

Base case: $|B| = 1$. If $A = \emptyset$, then \eqref{e15} implies \eqref{e17}; if $|A| = 1$, then \eqref{e16} implies \eqref{e17}.

Inductive hypothesis (I.H.): assume that for some $\ell\in\mathbb{N}$, \eqref{e17} holds for $|B|\leqslant \ell$. We show that \eqref{e17} holds for $|B| = \ell+1$. If $A = \emptyset$, then we use \eqref{e15} inductively to obtain \eqref{e17}. For $|A| \geqslant 1$, let $p = \max A$, $q\in B$, $A' = A\backslash \{p\}$, and $B' = B\backslash \{q\}$. We have
\begin{align*}\|x+1_{\varepsilon A}\|\ =\ \|(x + \varepsilon_p e_p) + 1_{\varepsilon A'}\|&\ \leqslant\ \|(x+\varepsilon_p e_p) + 1_{\delta B'}\|\mbox{ by I.H.}\\
&\ \leqslant\ \|(x+\delta_q e_q) + 1_{\delta B'}\|\mbox{ by \eqref{e16}}\\
&\ =\ \|x+1_{\delta B}\|.\end{align*}
This shows that $\mathcal{B}$ is $1$-RPSLC w.const $1$. 
\end{proof}

\begin{cor}\label{ca2}
The following are equivalent
\begin{enumerate}
    \item [i)] $(e_n)_n$ is $1$-RPG2 w.const $1$.
    \item [ii)] $(e_n)_n$ is $1$-RPSLC w.const $1$. 
    \item [iii)] $(e_n)_n$ satisfies \eqref{e15} and \eqref{e16}.
    \item [iv)] $(e_n)_n$ is RPG w.const $1$. 
\end{enumerate}
\end{cor}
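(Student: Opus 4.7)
The plan is to deduce i) $\Leftrightarrow$ ii) $\Leftrightarrow$ iii) directly from Theorems~\ref{m6} and \ref{m7}, and then to establish iii) $\Leftrightarrow$ iv), which constitutes the content of Corollary~\ref{ca1}. Observe first that $1$-($1$, RPGII) is literally the assertion that \eqref{e1} holds with $\mathbf{C}=1$, since $\lceil 1\cdot m\rceil = m$; hence i) $\Leftrightarrow$ ii) is Theorem~\ref{m6} and i) $\Leftrightarrow$ iii) is Theorem~\ref{m7}.

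For iv) $\Rightarrow$ iii), I would simply test \eqref{e12} at explicit vectors. Applying it to $y = x+e_k$ with $\Lambda = \{k\}$ and $A = \emptyset$ yields \eqref{e15}; applying it to $y = x + se_j + te_k$ with $\Lambda = \{j\}$ and $A = \{k\}$ (so that $A > \Lambda$ and $|A|=1$) yields \eqref{e16}.

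For iii) $\Rightarrow$ iv), note that iii) gives ii) via Theorem~\ref{m7}, which in turn furnishes $\mathbf{C}_\ell = 1$ suppression quasi-greediness via the proof of Theorem~\ref{m6}. Given $\Lambda\in G(x,m)$ and $A > \Lambda$ with $|A|\leq m$, the case $A = \emptyset$ reduces to suppression quasi-greediness, so assume $A\neq\emptyset$. After scaling I may assume $\alpha := \min_{n\in\Lambda}|e_n^*(x)| = 1$; set $z := x - P_{\Lambda\cup A}(x)$ and $\varepsilon_n := \sgn(e_n^*(x))$. Since $\Lambda\cap A = \emptyset$, one has $\|x-P_\Lambda(x)\| = \|z + P_A(x)\|$ and $\|x-P_A(x)\| = \|z + P_\Lambda(x)\|$. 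The target is the chain
\[
\|z + P_A(x)\| \ \leq\ \sup_\delta \|z + 1_{\delta A}\| \ \leq\ \|z + 1_{\varepsilon\Lambda}\| \ \leq\ \|z + P_\Lambda(x)\|.
\]
The first step is a Rademacher (or, in the complex case, Poisson-kernel) convexity bound using $|e_n^*(x)|\leq 1$ on $A$. The third step follows from the identity $T_1(z+P_\Lambda(x)) = z+1_{\varepsilon\Lambda}$ combined with Theorem~\ref{bto} at $\mathbf{C}_\ell = 1$: at $n\in\Lambda$ the coefficient $e_n^*(x)$ of magnitude $\geq 1$ truncates to $\varepsilon_n$, while elsewhere magnitudes are $\leq 1$ and are preserved.

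The main obstacle is the middle inequality, which I would prove by iterated \eqref{e16}. Writing $A = \{a_1<\cdots<a_k\}$ and $\Lambda = \{b_1<\cdots<b_\ell\}$ with $k\leq\ell$ and $b_\ell < a_1$, for each fixed $\delta$ the $i$-th swap applies \eqref{e16} to the current intermediate vector $v_{i-1}$ to replace $a_{k-i+1}$ by $b_i$ with sign $\varepsilon_{b_i}$; the hypotheses of \eqref{e16} (the two indices outside $\supp(v_{i-1})$, the ordering $b_i < a_{k-i+1}$, and $\|v_{i-1}\|_\infty \leq 1$) hold throughout because $\Lambda$, $A$, and $\supp(z)$ are pairwise disjoint and each inserted coefficient has magnitude exactly~$1$. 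After $k$ swaps one obtains $\|z + 1_{\delta A}\| \leq \|z + 1_{\varepsilon\{b_1,\ldots,b_k\}}\|$, and the remaining $b_{k+1},\ldots,b_\ell$ are appended one at a time via Proposition~\ref{p1} at $A=\emptyset$. The difficulty is purely bookkeeping; given the disjointness and orderings, verifying the hypotheses of \eqref{e16} at each swap is routine.
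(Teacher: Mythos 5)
Your proposal is correct and covers all the equivalences, but it takes a noticeably more self-contained route than the paper for the last leg. The paper handles i) $\Leftrightarrow$ ii) $\Leftrightarrow$ iii) exactly as you do, via Theorems~\ref{m6} and~\ref{m7}, and then disposes of iii) $\Leftrightarrow$ iv) in one line by citing \cite[Theorem~3.4]{DK}. You instead prove iii) $\Leftrightarrow$ iv) from scratch: the forward direction iv) $\Rightarrow$ iii) by testing \eqref{e12} on $x+e_k$ and $x+se_j+te_k$ (correct, and the greedy-set hypotheses do hold for these choices), and iii) $\Rightarrow$ iv) via the three-step chain
\[
\|z + P_A(x)\|\ \leqslant\ \sup_{\delta}\|z+1_{\delta A}\|\ \leqslant\ \|z+1_{\varepsilon\Lambda}\|\ \leqslant\ \|z+P_\Lambda(x)\|,
\]
combining convexity, iterated use of \eqref{e16} and \eqref{e15}, and the $1$-bounded truncation operator from Theorem~\ref{bto}. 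This is essentially a reconstruction of the content of the cited Dilworth--Khurana result; it buys self-containment at the cost of length, whereas the paper's version is terser but relies on the external reference.

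Two small points worth tightening. First, when you append the leftover indices $b_{k+1},\dots,b_\ell$ with the signs $\varepsilon_{b_j}$, \eqref{e15} as stated only inserts the scalar $+1$; you need to observe (via homogeneity, applying \eqref{e15} to $\bar\varepsilon_{b_j} w$) that \eqref{e15} self-improves to $\|w\|\leqslant\|w+te_k\|$ for every unimodular $t$ — the same upgrade is implicitly used in the paper's proof of Theorem~\ref{m7}. Second, the normalization $\alpha=1$ tacitly assumes $\alpha>0$; if $\alpha=0$ then $\supp(x)\subset\Lambda$, $x-P_\Lambda(x)=0$, and the inequality is trivial, so this degenerate case should be dispatched separately. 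Neither gap is serious, but both deserve a sentence.
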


\begin{proof}
The equivalence among i), ii), and iii) is due to Theorems \ref{m6} and \ref{m7}. That iii) is the same as iv) is due to \cite[Theorem 3.4]{DK}.
\end{proof}

\section{The $\lambda$-RPG2 property is weaker than the RPG property}
The goal of this section is to prove Theorem \ref{m8}.

\begin{proof}
i) If $\mathcal{B}$ is RPG, then it is quasi-greedy and reverse conservative. By definition, a reverse conservative basis is $\lambda$-reverse conservative of type 2. By Theorem \ref{m9}, $\mathcal{B}$ is $\lambda$-RPG2. 

ii) For each $\lambda > 1$, we now construct an unconditional basis $\mathcal{B}$ that is $\lambda$-RPG2 but is not RPG. Let $D = \{2^n\,:\, n\in \mathbb{N}_0\}, u_n = \frac{1}{\sqrt{n}}$, and $v_n = \frac{1}{n}$ for all $n\geqslant 1$.
Let $\mathbb{X}$ be the completion of $c_{00}$ with respect to the following norm: for $x = (x_1, x_2, \ldots)$, define
$$\|x\|\ =\ \sup_{\pi, \pi'}\left(\sum_{n\in D}u_{\pi(n)} |x_{n}| + \sum_{n\notin D}v_{\pi'(n)}|x_n|\right),$$
where $\pi: D\rightarrow \mathbb{N}$ and $\pi': \mathbb{N}\backslash D\rightarrow \mathbb{N}$ are bijections. 
Clearly, the canonical basis $\mathcal{B}$ is an $1$-unconditional normalized basis of $\mathbb{X}$. 

\begin{claim}
The basis $\mathcal{B}$ is not reverse conservative and thus, is not RPG. 
\end{claim}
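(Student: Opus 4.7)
The plan is to first compute $\|1_S\|$ for a finite set $S\subset\mathbb{N}$ explicitly. Since the weights $u_k=1/\sqrt{k}$ and $v_k=1/k$ are decreasing and the sup in the norm is taken over bijections $\pi\colon D\to\mathbb{N}$ and $\pi'\colon\mathbb{N}\setminus D\to\mathbb{N}$, the supremum is attained by sending $S\cap D$ to the initial segment $\{1,\dots,|S\cap D|\}$ and $S\setminus D$ to $\{1,\dots,|S\setminus D|\}$. This yields the identity
\[
\|1_S\|\;=\;\sum_{k=1}^{|S\cap D|}\frac{1}{\sqrt{k}}\;+\;\sum_{k=1}^{|S\setminus D|}\frac{1}{k}.
\]

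Next, I would exhibit a sequence of pairs $(A_n,B_n)$ witnessing the failure of reverse conservativeness. For each $n\in\mathbb{N}$, pick $N=N(n)$ large enough that the interval $[1,2^N)$ contains at least $n$ non-dyadic integers (any $N\geqslant \log_2(2n+1)$ suffices). Let $B_n$ be any $n$ such non-dyadic integers, and set $A_n=\{2^N,2^{N+1},\dots,2^{N+n-1}\}\subset D$. Then $|A_n|=|B_n|=n$ and $B_n<A_n$, while by the formula above
\[
\|1_{A_n}\|\;=\;\sum_{k=1}^n\frac{1}{\sqrt{k}}\;\geqslant\;2(\sqrt{n+1}-1),\qquad \|1_{B_n}\|\;=\;\sum_{k=1}^n\frac{1}{k}\;\leqslant\;1+\ln n.
\]
Hence $\|1_{A_n}\|/\|1_{B_n}\|\to\infty$, so no finite constant $\mathbf{C}$ can satisfy the defining inequality of reverse conservativeness. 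By Theorem \ref{m1}, $\mathcal{B}$ cannot be RPG.

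The only step requiring real attention is the computation of $\|1_S\|$; once it is in place, the mismatch between the growth rates $\sum 1/\sqrt{k}\sim 2\sqrt{n}$ and $\sum 1/k\sim\ln n$ at the same cardinality supplies the blow-up automatically. I do not foresee a genuine obstacle.
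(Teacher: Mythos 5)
Your proof is correct and takes essentially the same approach as the paper: choose $A$ inside the dyadic set $D$ and $B$ outside $D$, with $|A|=|B|=n$ and $B<A$, then exploit the mismatch between $\sum_{k\le n}1/\sqrt{k}\sim\sqrt{n}$ and $\sum_{k\le n}1/k\sim\ln n$. The paper uses the specific choices $A_N=\{2^{2N+1},\dots,2^{3N}\}$ and $B_N=\{3,3^2,\dots,3^N\}$, but this is only a cosmetic difference.
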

\begin{proof}
We use the notation $\sim$ to indicate the order of a number.
For each $N\in\mathbb{N}$, let $A_N = \{2^{2N+1}, 2^{2N+2}, \ldots, 2^{3N}\}$ and $B_N = \{3, 3^2, \ldots, 3^N\}$. Then $|A_N| = |B_N| = N$ and $B_N < A_N$. However,
$$\|1_{A_N}\|\ =\ \sum_{n=1}^N\frac{1}{\sqrt{n}}\ \sim\ \sqrt{N} \mbox{ and }\|1_{B_N}\|\ =\ \sum_{n=1}^N\frac{1}{n} \ \sim\ \ln N,$$
which imply that $\|1_{A_N}\|/\|1_{B_N}\|\rightarrow\infty$ as $N\rightarrow\infty$. Therefore, $\mathcal{B}$ is not reverse conservative. 
\end{proof}

\begin{claim}
The basis $\mathcal{B}$ is $\lambda$-reverse conservative of type 2 and thus, is $\lambda$-RPG2.
\end{claim}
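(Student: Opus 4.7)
My plan is to reduce the inequality $\|1_A\|\leqslant \mathbf{C}\|1_B\|$ to elementary estimates on harmonic-type sums. The supremum in the norm is attained greedily, so for any finite $S\subset\mathbb{N}$,
$$\|1_S\|\ =\ \sum_{k=1}^{|S\cap D|}\frac{1}{\sqrt{k}}+\sum_{k=1}^{|S\setminus D|}\frac{1}{k}.$$
Writing $a_1=|A\cap D|$, $a_2=|A\setminus D|$, $b_1=|B\cap D|$, $b_2=|B\setminus D|$, the standard integral bounds give $\|1_A\|\asymp\sqrt{a_1}+\log(a_2+1)$, and similarly for $B$, with absolute implicit constants.

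The combinatorial input I would establish first is: if $a_1\geqslant 1$ then $s(A)\geqslant 2^{a_1-1}$, because any $a_1$ distinct powers of $2$ span an interval of length at least $2^{a_1-1}$. Combined with the trivial $s(A)\geqslant|A|\geqslant a_2$, the hypothesis $(\lambda-1)s(A)+|A|\leqslant|B|$ produces a constant $c_\lambda>0$ (depending only on $\lambda$) with $|B|\geqslant c_\lambda(2^{a_1}+a_2)$. Since $|B|=b_1+b_2$, at least one of $b_1$, $b_2$ exceeds $\tfrac{c_\lambda}{2}(2^{a_1}+a_2)$, and I would split into these two cases.

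In the first case, $b_1\geqslant\tfrac{c_\lambda}{2}(2^{a_1}+a_2)$ gives
$$\|1_B\|\ \geqslant\ \sqrt{b_1}\ \gtrsim\ 2^{a_1/2}+\sqrt{a_2}\ \gtrsim\ \sqrt{a_1}+\log(a_2+1)\ \asymp\ \|1_A\|,$$
using the elementary inequalities $2^{t/2}\geqslant\sqrt{t}$ and $\sqrt{t}\geqslant\log(t+1)$ valid for all $t\geqslant 0$. The harder case is $b_2\geqslant\tfrac{c_\lambda}{2}(2^{a_1}+a_2)$, where only $\|1_B\|\gtrsim\log(2^{a_1}+a_2)$ is available. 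This controls $\max(a_1\log 2,\log(a_2+1))$ but not their sum, and the non-additivity of $\log$ is the main obstacle. I would resolve it by a further sub-case split on which of $a_1\log 2$, $\log(a_2+1)$ is larger: in the former regime, $\sqrt{a_1}+\log(a_2+1)\lesssim a_1\asymp\log 2^{a_1}$; in the latter, $\sqrt{a_1}\leqslant\sqrt{\log(a_2+1)/\log 2}\lesssim\log(a_2+1)$ once $a_2$ exceeds an absolute constant, and the few remaining small cases are absorbed into a multiplicative constant because $\|1_A\|$ is then bounded while $\|1_B\|\geqslant 1$.

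Once the claim is established, Theorem \ref{m9} finishes the proof that $\mathcal{B}$ is $(\lambda,\text{RPGII})$: the $1$-unconditionality of $\mathcal{B}$, built into the definition of the norm via the absolute values, makes $\mathcal{B}$ automatically quasi-greedy.
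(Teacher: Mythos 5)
Your proof is correct and takes essentially the same route as the paper: write the norms as explicit harmonic-type sums, lower-bound $s(A)$ by roughly $2^{|A\cap D|}$ because $A\cap D$ consists of distinct powers of $2$, feed this through the hypothesis $(\lambda-1)s(A)+|A|\leqslant|B|$, and finish by a case analysis on which part of the sum dominates. One small remark: the ``non-additivity of $\log$'' obstacle you flag in the case $b_2\geqslant\tfrac{c_\lambda}{2}(2^{a_1}+a_2)$ is illusory, since $\log(2^{a_1}+a_2)\geqslant\max\bigl(a_1\log 2,\log(a_2+1)\bigr)\geqslant\tfrac12\bigl(a_1\log 2+\log(a_2+1)\bigr)\gtrsim\sqrt{a_1}+\log(a_2+1)$ directly, so the further sub-split on which of $a_1\log 2$ and $\log(a_2+1)$ is larger is harmless but unnecessary.
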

\begin{proof}
Pick nonempty, finite sets $A, B\subset\mathbb{N}$ with $(\lambda-1)s(A) + |A|\leqslant |B|$ and $B < A$. Then
$$\|1_A\|\ =\ \sum_{n=1}^{|A\cap D|}\frac{1}{\sqrt{n}} + \sum_{n=1}^{|A\backslash D|}\frac{1}{n}\mbox{ and }\|1_B\|\ =\ \sum_{n=1}^{|B\cap D|}\frac{1}{\sqrt{n}} + \sum_{n=1}^{|B\backslash D|}\frac{1}{n}.$$

\begin{claim}\label{cll1}
We have $\|1_B\|\gtrsim \ln |B|$.
\end{claim}

\begin{proof}
If $B\backslash D = \emptyset$, then $B\cap D = B$ and $\|1_B\| = \sum_{n=1}^{|B|}1/\sqrt{n} \sim \sqrt{|B|}$. 
Similarly, if $B\cap D = \emptyset$, then $B\backslash D = B$ and $\|1_B\| = \sum_{n=1}^{|B|}1/n \sim \ln{|B|}$.
Finally, if $B\backslash D\neq \emptyset$ and $B\cap D\neq \emptyset$, then 
$$\|1_B\|\ \gtrsim\ \ln{|B\cap D|} + \ln|B\backslash D|\ =\ \ln(|B\cap D|\cdot|B\backslash D|)\ \geqslant\ \ln(|B|-1)\ \gtrsim\ \ln|B|.$$
\end{proof}

We proceed by case analysis.

Case 1: $\sum_{n=1}^{|A\cap D|}\frac{1}{\sqrt{n}}\ \leqslant\ \sum_{n=1}^{|A\backslash D|}\frac{1}{n}$. We get
$$\|1_A\|\ \leqslant\ 2\sum_{n=1}^{|A\backslash D|}\frac{1}{n}\ \leqslant\ 2\sum_{n=1}^{|B|}\frac{1}{n}\ \leqslant\ 2\left(\sum_{n=1}^{|B\cap D|}\frac{1}{\sqrt{n}} + \sum_{n=1}^{|B\backslash D|}\frac{1}{n}\right)\ =\ 2\|1_B\|.$$

Case 2: $\sum_{n=1}^{|A\cap D|}\frac{1}{\sqrt{n}}\ >\ \sum_{n=1}^{|A\backslash D|}\frac{1}{n}$. Then $\|1_A\|\sim \sqrt{|A\cap D|}$.
If $|B\cap D|\geqslant |A\cap D|$, then we are done because $$\|1_B\| \ \geqslant\ \sum_{n=1}^{|B\cap D|}\frac{1}{\sqrt{n}}\ \sim\ \sqrt{|B\cap D|}\ \geqslant\ \sqrt{|A\cap D|}\ \sim\ \|1_A\|.$$
Suppose that $|B\cap D| < |A\cap D|$. Let $N = |A\cap D|$ and write $A\cap D = \{2^{k_1}, 2^{k_2}, \ldots, 2^{k_N}\}$ to obtain $\|1_A\|\sim \sqrt{N}$. 

\begin{enumerate}
    \item[i)] Case 2.1: $N = 1$. Then $\sum_{n=1}^{|A\cap D|}\frac{1}{\sqrt{n}}\ >\ \sum_{n=1}^{|A\backslash D|}\frac{1}{n}$ implies that $|A\backslash D| = 0$ and $|A| = 1$. Clearly, $\|1_A\|\leqslant \|1_B\|$.
    \item[ii)] Case 2.2: $N\geqslant 2$. We have
$$|B|\ \geqslant\ (\lambda-1)s(A) \ \geqslant\ (\lambda-1)(2^{k_N}-2^{k_1}+1)\ \geqslant\ (\lambda-1)2^{k_N-1}.$$
Hence,
$$\ln|B|\ \geqslant\ \ln(\lambda-1) + (N-1)\ln 2.$$
If $\ln(\lambda-1) + 0.5(N-1)\geqslant 0$, then by Claim \ref{cll1},
$$\|1_B\| \ \gtrsim\ \ln |B|\ \geqslant\ (\ln 2 -0.5)(N-1) \ \gtrsim \ \sqrt{N} \ \sim\ \|1_A\|.$$
If $\ln(\lambda-1) + 0.5(N-1) < 0$, then $N < 1 - 2\ln (\lambda-1)$. In this case,
$$\|1_A\|\ \sim\ \sqrt{N}\ <\ \sqrt{1 - 2\ln (\lambda-1)}\ \leqslant\  \sqrt{1 - 2\ln (\lambda-1)}\|1_B\|.$$
\end{enumerate}
We have shown that in all cases, there exists a constant $\mathbf C = \mathbf C(\lambda)$ such that $\|1_A\| \leqslant \mathbf C\|1_B\|$. Therefore, $\mathcal{B}$ is $\lambda$-reverse consecutive of type 2.
\end{proof}

We conclude that our basis $\mathcal{B}$ is $1$-unconditional and $\lambda$-RPG2 but is not RPG. 
\end{proof}



\ \\

\begin{thebibliography}{9999}
\bibitem{AA0} F. Albiac and J. L. Ansorena, Characterization of $1$-quasi-greedy bases, \textit{J. Approx. Theory} \textbf{201} (2016), 7--12. 


\bibitem{AA} F. Albiac and J. L. Ansorena, Characterization of $1$-almost greedy bases, \textit{Rev. Mat. Complut.} \textbf{30} (2017), 13--24. 

\bibitem{AK} F. Albiac and N. Kalton, Topics in Banach Space Theory, second edition, ISBN 978-3-319-31555-3 (2016).


\bibitem{AW} F. Albiac and P. Wojtaszczyk, Characterization of $1$-greedy bases, \textit{J. Approx. Theory} \textbf{138} (2006), 65--86.

\bibitem{BBL} M. Berasategui, P. M. Bern\'{a}, and S. Lassalle, Strong partially greedy bases and Lebesgue-type
inequalities, \textit{Constr. Approx.} \textbf{54} (2021), 507--528.

\bibitem{BBG} P. M. Bern\'{a}, O. Blasco, and G. Garrig\'os, Lebesgue inequalities for greedy algorithm in general bases, \textit{Rev. Mat. Complut.} \textbf{30} (2017), 369--392.

\bibitem{BBC1} M. Berasategui, P. M. Bern\'{a}, and H. V. Chu, Extensions and new characterizations of some greedy-type bases, \textit{Bull. Malays. Math. Sci. Soc.} \textbf{46} (2023), 1--18.

\bibitem{BBC2} M. Berasategui, P. M. Bern\'{a}, and H. V. Chu, On consecutive greedy and other greedy-like type of bases, preprint (2023). Available at: \url{https://arxiv.org/abs/2302.05758}.


\bibitem{C1} H. V. Chu, Performance of the thresholding greedy algorithm with larger greedy sums, \textit{J. Math. Anal. Appl.} \textbf{525} (2023), 1--23.

\bibitem{C2} H. V. Chu, Strong partially greedy bases with respect to an arbitrary sequence, preprint (2022). Available at: \url{https://arxiv.org/abs/2208.07300}.



\bibitem{DKKT} S. J. Dilworth, N. J. Kalton, D. Kutzarova, and V. N. Temlyakov, The thresholding greedy algorithm, greedy bases, and duality, \textit{Constr. Approx.} \textbf{19} (2003), 575--597.



\bibitem{DK} S. J. Dilworth and D. Khurana, Characterizations of almost greedy and partially greedy bases, \textit{Jaen J. Approx.} \textbf{11} (2019), 115--137.

\bibitem{DKO} S. J. Dilworth, D. Kutzarova, and T. Oikhberg, Lebesgue constants for the weak greedy algorithm, \textit{Rev. Mat. Complut.} \textbf{28} (2015), 393--409.


\bibitem{K} D. Khurana, Weight-partially greedy bases and weight-property (A), \textit{Ann. Funct. Anal.} \textbf{11} (2020),
101--117.

\bibitem{KT1} S. V. Konyagin and V. N. Temlyakov, A remark on greedy approximation in Banach spaces, \textit{East J. Approx.} \textbf{5} (1999), 365--379.

\bibitem{O} T. Oikhberg, Greedy algorithm with gaps, \textit{J. Approx. Theory} \textbf{225} (2018), 176--190.
\end{thebibliography}
\end{document}